\newtheorem{satz}{Proposition}[section]
\newtheorem{lem}[satz]{Lemma} 
\newtheorem{remark}[satz]{Remark}
\newtheorem{thm}[satz]{Theorem}
\numberwithin{equation}{section}
\definecolor{gray}{gray}{0.50}
\definecolor{lred}{rgb}{1.0,0.5,0.5}
\definecolor{lblue}{rgb}{0.0, 0.313, 0.608}
\definecolor{dgreen}{rgb}{0,1,1}
\definecolor{luh-dark-blue}{rgb}{0.0, 0.313, 0.608}
\newcommand{\chookrightarrow}{\mathrel{\lhook\joinrel\relbar\kern-.8ex\joinrel\lhook\joinrel\rightarrow}}
\newcommand{\R}{\mathbb{R}}
\newcommand{\N}{\mathbb{N}}		   
\newcommand{\C}{\mathbb{C}}	
\newcommand{\e}{\varepsilon}
\DeclareMathOperator{\re}{Re}
\title[Well-posedness and stability for a thin film equation with  surfactant]{Well-posedness and stability for a mixed order system arising in thin film equations with surfactant}
\author{Gabriele Bruell}
\address{Institute for Analysis, Karlsruher Institute of Technology (KIT), D-76128 Karlsruhe, Germany}
\email{gabriele.bruell@kit.edu}
\thanks{Date: \today }
\begin{document}
	\subjclass[2010]{35B35, 35K41, 35K59 , 35K65, 76A20}
	\keywords{Local well-posedness; Thin film equations; Degenerate parabolic system; Mixed orders; Asymptotic stability}
	
\maketitle

\begin{abstract}
	The objective of the present work is to provide a well-posedness result for a capillary driven thin film equation with insoluble surfactant. The resulting parabolic system of evolution equations is not only strongly coupled and degenerated, but also of mixed orders. To the best of our knowledge the only well-posedness result for a capillary driven thin film with surfactant is provided in \cite{B1} by the same author, where a severe smallness condition on the surfactant concentration is assumed to prove the result. Thus, in spite of an intensive analytical study of  thin film equations with surfactant during the last decade, a proper well-posedness result is still missing in the literature. It is the aim of the present paper to fill this gap.  \\
	Furthermore, we apply a recently established result on asymptotic stability in interpolation spaces \cite{MW} to prove that the flat equilibrium of our system is asymptotically stable.
\end{abstract}

\bigskip

\section{Introduction}
The present paper is a note on a so far missing piece in the analysis of a thin film equation with insoluble surfactant. Classically, the evolution equations for a thin fluid film equipped with a layer of insoluble surfactant was derived from the Navier--Stokes equations with an advection-diffusion equation on the free surface by Jensen \& Grotberg \cite{JG}  using lubrication approximation and cross-sectional averaging. The thin fluid film is assumed to be uniform in one horizontal direction and the contact angle between the fluid and the impermeable flat bottom is zero, which corresponds to the frame of so-called \emph{complete wetting}. 

\begin{figure}[h!]
	\centering
	\begin{tikzpicture}[domain=0:3*pi, scale=1] 
	\draw[color=black] plot (\x,{0.3*cos(\x r)+1}); 
	\draw[very thick, smooth, variable=\x, luh-dark-blue] plot (\x,{0.3*cos(\x r)+1}); 
	\fill[luh-dark-blue!10] plot[domain=0:3*pi] (\x,0) -- plot[domain=3*pi:0] (\x,{0.3*cos(\x r)+1});
	\draw[very thick,<->] (3*pi+0.4,0) node[right] {$x$} -- (0,0) -- (0,2) node[above] {$z$};
	\draw[very thick,->] (2*pi,0) -- (2*pi,1.3);
	\node[right] at (2*pi,0.5) {$h(t,x)$};
	\coordinate[label=above:{$\Gamma(t,x)$}] (A) at (pi,0.72);
	\fill[color=luh-dark-blue] (A) circle (1.5pt);
	\draw[-] (0,-0.3) -- (0.3, 0);
	\draw[-] (0.5,-0.3) -- +(0.3, 0.3);
	\draw[-] (1,-0.3) -- +(0.3, 0.3);
	\draw[-] (1.5,-0.3) -- +(0.3, 0.3);
	\draw[-] (2,-0.3) -- +(0.3, 0.3);
	\draw[-] (2.5,-0.3) -- +(0.3, 0.3);
	\draw[-] (3,-0.3) -- +(0.3, 0.3);
	\draw[-] (3.5,-0.3) -- +(0.3, 0.3);
	\draw[-] (4,-0.3) -- +(0.3, 0.3);
	\draw[-] (4.5,-0.3) -- +(0.3, 0.3);
	\draw[-] (5,-0.3) -- +(0.3, 0.3);
	\draw[-] (5.5,-0.3) -- +(0.3, 0.3);
	\draw[-] (6,-0.3) -- +(0.3, 0.3);
	\draw[-] (6.5,-0.3) -- +(0.3, 0.3);
	\draw[-] (7,-0.3) -- +(0.3, 0.3);
	\draw[-] (7.5,-0.3) -- +(0.3, 0.3);
	\draw[-] (8,-0.3) -- +(0.3, 0.3);
	\draw[-] (8.5,-0.3) -- +(0.3, 0.3);
	\draw[-] (9,-0.3) -- +(0.3, 0.3);
	\end{tikzpicture} 
	\caption{Scheme of a  thin film flow with insoluble surfactant}\label{Fig1}
\end{figure}
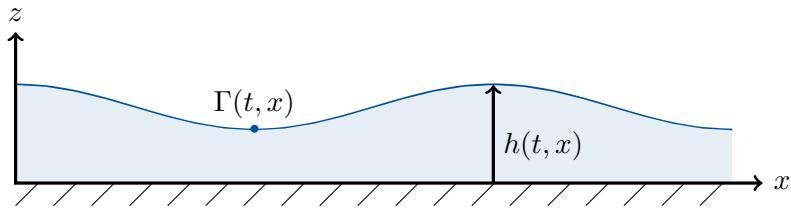

Set $\Omega:=(0,L)$ and denote by $h=h(t,x)$ and $\Gamma=\Gamma(t,x)$ the film height and the surfactant concentration at time $t\in [0,\infty)$ and space $x\in\Omega$.
Under consideration of different driving forces, such as capillarity or gravitation, the resulting system of evolution equations for the fluid height $h$ and the surfactant concentration $\Gamma$ consist either of two parabolic, strongly coupled and degenerated equations of second order in the case when gravitational forces dominate or the evolution equations constitute a system of mixed orders  if capillary effects are taken into account -- a fourth-order equation for the evolution of the fluid height coupled with a second-order equation for the surfactant concentration.
 Considering capillary effects as the only driving force, the evolution of a thin film endowed with a layer of insoluble surfactant on the surface is given by
\begin{equation}
\label{eq:system}
	\left\{ 
		\begin{array}{lcl}
			\partial_t h + \partial_x \left[\frac{1}{3}h^3\partial_x^3 h + \frac{1}{2}h^2\partial_x \sigma(\Gamma) \right]=0,\\[5pt]
			\partial_t \Gamma + \partial_x\left[\frac{1}{2}h^2\Gamma\partial_x^3h + h\Gamma \partial_x \sigma(\Gamma)\right]=\mathcal{D}\partial_x^2 \Gamma,
		\end{array}
	\right.
\end{equation}
in $(0,\infty)\times \Omega$. The system is supplemented with the following no-flux boundary conditions
\begin{equation}
\label{eq:boundary}
	\partial_x h = \partial_x^3 h =0 \qquad \mbox{and}\qquad \partial_x \Gamma =0 \qquad \mbox{on} \quad \partial \Omega =\{0,L\},
\end{equation}
and initial conditions
\begin{equation}
\label{eq:initial}
	h(0,x)=h_0(x), \qquad \Gamma(0,x)=\Gamma_0(x).
\end{equation}

The constant $\mathcal{D}>0$ appearing on the right hand side of the transport equation for the surfactant concentration denotes a surface diffusion coefficient and the function $\sigma$ represents the surface tension coefficient which is (decreasingly) dependent on the surfactant concentration. The function $\sigma$ is given and we assume throughout our analysis that it satisfies
\[
\sigma\in C^2(\R)\qquad\mbox{and}\qquad -\sigma^\prime(s)\geq 0\quad\mbox{for all}\quad s\geq 0.
\]

The corresponding system for a thin film evolution driven by gravitational forces only can be recovered from \eqref{eq:system} by replacing the appearing third-order operators $\partial_x^3$ with $-\partial_x$. For the resulting second-order system a rigorous mathematical analysis is provided by \cite{EHLW11, EHLW1}. In \cite{EHLW1}  the authors derive the mathematical model in the case of soluble surfactant by means of lubrication approximation and prove  local well-posedness as well as asymptotic stability of the (one and only) flat equilibrium.  In view of the degeneracy of the equations with respect to the film height, it is not expected that classical solutions exist globally in time. Based on an associated energy functional, which provides sufficient a priori estimates, the existence of nonnegative global weak solutions is investigated in \cite{EHLW11}. Concerning the fourth-order counter part, that is system \eqref{eq:system}, when capillary effects instead of gravitation form the driving force, the existence of nonnegative global weak solutions is studied in \cite{CT13, EHLW4th, GW06}. Moreover, the existence and asymptotic behavior of global weak solutions of a thin film equation with insoluble surfactant under the influence of gravitational, capillary as well as van der Waals forces (the system of evolution equations derived by Jensen \& Grotberg \cite{JG}) is subject of \cite{BG2}. Eventually,  a corresponding analysis concerning modeling, well-posedness, asymptotic stability of equilibria, and weak solutions is carried out in \cite{B1,B2} for a \emph{two-phase} thin film equation with insoluble surfactant under consideration of capillary effects.  We would like to point out that the proof of the well-posedness result presented in \cite{B1} for the capillary driven film, which also implies the well-posedness of \eqref{eq:system}, is restricted to a  smallness assumption on the initial data, which is not desirable. 

\medskip

In view of this unsatisfactory condition, a proper well-posedness result for the capillary driven thin film equation \eqref{eq:system} with insoluble surfactant is still missing in the literature. It is the aim of the present paper to close this gap; thereby also providing a basis for proving well-posedness of comparable systems arising in thin film equations with surfactant.
System \eqref{eq:system} can be rewritten as a quasilinear evolution equation in a suitable positive cone of Sobolev spaces  to be made precise below:
\[
u_t-A(u)u(t)=F(u(t)),\quad t>0,\qquad u(0,x)=u_0(x),
\]
where $u=(h,\Gamma)$. Here $A(u)$ is the leading order matrix having the form
\[
A(u):= \begin{pmatrix} -a_{11}(u)\partial_x^4 & a_{12}(u)\partial_x^2 \\  -a_{21}(u)\partial_x^4 & a_{22}(u)\partial_x^2 \end{pmatrix}
\]
with coefficients $a_{ij}(u)>0$, $1\leq i,j\leq 2$. The function $F$ comprises lower-order terms. The approach implemented in \cite{B1} relies on the fact that the operators  on the diagonal of $A$ generate analytic semigroups on suitable domains. The well-posedness result is then a consequence of \cite[Theorem I.1.6.1]{Amann95} on matrix generators, provided the off diagonal terms satisfy a certain relation, which can be forced by a smallness assumption on the surfactant concentration in higher-order Sobolev spaces. Concerning the present work and its aim  to provide a well-posedness result of \eqref{eq:system} without any smallness assumptions on the initial data, we proceed more directly and prove resolvent estimates for the mixed-order matrix operator $A$ by means of parameter ellipticity in the sense of Douglis--Nirenberg. Verifying a certain Loptatinskij--Shapiro condition (cf. (C2) in Section \ref{S:P}), we affirm that the mixed-order matrix $A$ with fixed coefficients in a certain regularity space generates an analytic semigroup. The well-posedness result then follows from a classical result on abstract parabolic equations by Amann in \cite[Section 12]{AmannNon} and \cite[Theorem 1.1]{MW}.
\medskip

Eventually, we recall that the one and only equilibrium of system \eqref{eq:system} is given by the flat state, which is uniquely determined by the initial data. We apply a recently established result in \cite{MW} to prove that the flat equilibrium is asymptotically stable in interpolation spaces. This improves the existing stability result in \cite[Theorem 4.5]{B1}.

\medskip

The paper is structured as follows: In Section \ref{S:Q} we introduce the needed Sobolev spaces, rewrite \eqref{eq:system} as a quasilinear evolution equation and state our main result Theorem \ref{T1}.The remainder of the section is devoted to the proof of the theorem. We introduce the notion of parameter ellipticity in the sense of Douglis--Nirenberg and the Lopatinskij--Shapiro condition. Eventually, we verify that the mixed-order matrix $A(\bar u)$ for fixed $\bar u$ satisfies the above condition, which provide sufficient resolvent estimates to guarantee that $A(\bar u)$ generates an analytic semigroup. Section \ref{S:A} is concerned with the asymptotic stability result, which is based on \cite[Theorem 1.3]{MW}. In our case the claim  follows immediately provided that $A(u_*)$, where $u_*$ is the equilibrium solution,  has negative spectral bound on the subset of zero mean functions.

\medskip

We close the introduction with some comments on notation: If $X,Y$ are Banach spaces, then the set of all linear and bounded operators from $X$ to $Y$ is denoted by $\mathcal{L}(X,Y)$. If $X=Y$, we use the abbreviation $\mathcal{L}(X):= \mathcal{L}(X,X)$.  We write $A\in \mathcal{H}(X,Y)$ when $A$ is a linear, unbounded operator on $Y$ with domain $X$, which generates an  analytic semigroup on $\mathcal{L}(Y)$. The notation  $c=c(p_1,p_2,\ldots)>0$ is used, whenever we want to emphasize  that the constant $c>0$ depends on the parameters $p_1,p_2,\ldots$.
Furthermore, we denote by $\N_0:=\N\cup \{0\}$ the set of natural numbers including zero. 

\bigskip

\section{Well-posedness}
\label{S:Q}
We declare suitable Banach spaces on which the system of evolution equations \eqref{eq:system} will be studied.  In what follows we set $L_2:=L_2(\Omega,\R)$ and denote the $L_2$-based Bessel potential spaces on $\Omega=(0,L)$ with values in $\R$ of order $s>0$ by $H^s$. We aim to rewrite  \eqref{eq:system}--\eqref{eq:initial} in a setting appropriate to apply abstract parabolic theory to obtain our well-posedness result. For this purpose we define the following spaces, which incorporate the boundary conditions \eqref{eq:boundary} as soon as sufficient regularity is available. For any $s>0$, we set
$$ H_B^{s}:= \left\{f\in H^{s}\mid \partial_x^{2l+1}f=0  \:\mbox{at}\: x=0,L,  \:\mbox{for all}\: l\in\N_0 \:\mbox{with}\: 2l+1< s-\frac{1}{2}\right\}.$$
The space $H^4_B\times H^2_B$ plays a natural role in our analysis. For $\theta \in (0,1)\setminus\{\frac{3}{8},\frac{7}{8}\}$ the complex interpolation spaces between $L_2\times L_2$ and $H^{4}_B\times H^{2}_B$ are given by
\[
[L_2\times L_2, H^{4}_B\times H^{2}_B]_\theta=H^{4\theta}_B\times H^{2\theta}_B,
\]
cf. e.g. \cite[Theorem 4.3.3]{TriI}. Notice that for any $\theta>\frac{7}{8}$, the complex interpolation space above includes the boundary conditions \eqref{eq:boundary}.
 In view of the degeneracy of system \eqref{eq:system}, we require positivity of  classical solutions $u=(h,\Gamma)$ and set
 \begin{equation}\label{eq:O}
 \mathcal O:=\{u=(h, \Gamma)\in H^3\times H^1\mid h>0\mbox{ and } \Gamma>0 \mbox{ on } \overline \Omega\},
 \end{equation}
 which is a nonempty open subset of $(H^3\times H^1)\cap C(\Omega,(0,\infty)^2)$. 
For each $u=(h,\Gamma)\in \mathcal O$ we introduce the leading order matrix operator
\[
	A:\mathcal O \to \mathcal{L}(H^4_B\times H^2_B, L_2\times L_2)
\]
by
\begin{equation}\label{eq:A}
A(u):=-\begin{pmatrix}\frac{h^3}{3}\partial_x^4 & \frac{h^2}{2}\sigma^\prime(\Gamma)\partial_x^2  \\
						\frac{h^2}{2}\Gamma \partial_x^4  & (h\Gamma\sigma^\prime(\Gamma)-\mathcal{D})\partial_x^2 \end{pmatrix}.
\end{equation}
Clearly, for each fixed $u\in \mathcal O$, the operator $A(u)$ is a linear operator acting on $H^4_B\times H^2_B$. The lower order terms are comprised in the function $F:\mathcal O \to L_2\times L_2$, given by
\[
	F(u):= -\begin{pmatrix}
			h^2\partial_x h\partial_x^3h & -\partial_x\left(\frac{h^2}{2}\sigma^\prime(\Gamma) \right)\partial_x \Gamma \\
			\partial_x \left(\frac{h^2}{2}\Gamma\right)\partial_x^3h & -\partial_x\left(h\Gamma\sigma^\prime(\Gamma)\right)\partial_x\Gamma
		\end{pmatrix}.
\]
 With this notation, the system \eqref{eq:system} can be expressed as an autonomous quasilinear evolution equation in $L_2\times L_2$:
\begin{equation}\label{eq:equation}
	\dot{u}(t) -A(u(t))u(t)=F(u(t)), \quad t>0, \qquad u(0)=u_0,
\end{equation}
where $u_0=(h_0,\Gamma_0)$ is the initial datum. 
 Our main result is concerned with the local well-posedness of \eqref{eq:equation} and reads as follows:
\begin{thm}[Local well-posedness]\label{T1} 
Let $\alpha\in \left(\frac{7}{8},1\right)$ and $u_0=(h_0,\Gamma_0)\in \mathcal O_\alpha:= \mathcal O \cap (H^{4\alpha}_B\times H^{2\alpha}_B)$. Then, the problem \eqref{eq:equation} possesses a unique maximal strong solution 
\[
	u=(h,\Gamma)\in C^1((0,T(u_0)),L_2\times L_2)\cap C((0,T(u_0),H^4_B\times H^2_B)\cap C([0,T(u_0)),\mathcal O_\alpha),
\]
where $T(u_0)>0$  is the maximal time of existence depending on the initial datum. Moreover, the solution depends  continuously on the initial datum.
\end{thm}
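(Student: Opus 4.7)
The plan is to recast \eqref{eq:equation} as an abstract quasilinear parabolic Cauchy problem in the Banach couple $(E_0,E_1):=(L_2\times L_2,\,H^4_B\times H^2_B)$ with interpolation space $E_\alpha:=H^{4\alpha}_B\times H^{2\alpha}_B$, and to invoke Amann's local existence theorem from \cite[Section~12]{AmannNon} (in the form of \cite[Theorem~1.1]{MW}). This reduces the proof to two verifications: that for each frozen $\bar u\in\mathcal O_\alpha$ the matrix operator $A(\bar u)$ belongs to $\mathcal H(E_1,E_0)$, and that the Nemytskii-type maps $u\mapsto A(u)\in\mathcal L(E_1,E_0)$ and $u\mapsto F(u)\in E_0$ are locally Lipschitz on $\mathcal O_\alpha$.

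The heart of the proof, and its main obstacle, is the generator property: unlike in \cite{B1}, the off-diagonal coupling cannot be absorbed via a smallness assumption and must be handled directly. I would view $A(\bar u)$ as a Douglis--Nirenberg system with row weights $s_1=s_2=0$ and column weights $t_1=4$, $t_2=2$, so that the $(i,j)$-entry has order exactly $s_i+t_j$. A short computation shows that the principal symbol
\[
\hat A_0(\bar u,\xi)=\begin{pmatrix}-\tfrac{h^3}{3}\xi^4 & \tfrac{h^2}{2}\sigma'(\Gamma)\xi^2\\[2pt] -\tfrac{h^2}{2}\Gamma\xi^4 & \bigl(h\Gamma\sigma'(\Gamma)-\mathcal D\bigr)\xi^2\end{pmatrix}
\]
has determinant $\tfrac{h^3\xi^6}{12}\bigl(4\mathcal D-h\Gamma\sigma'(\Gamma)\bigr)>0$ and trace $-\tfrac{h^3}{3}\xi^4+\bigl(h\Gamma\sigma'(\Gamma)-\mathcal D\bigr)\xi^2<0$ for every $\xi\ne 0$, using $h,\mathcal D>0$ and $-\sigma'(\Gamma)\ge 0$. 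Hence both eigenvalues of $\hat A_0(\bar u,\xi)$ lie in a sector of the open left half-plane that is uniform in $\xi\ne 0$, which yields parameter ellipticity in a sector $\Sigma_\theta$ with $\theta>\pi/2$ in the sense of Douglis--Nirenberg. Combined with the Lopatinskij--Shapiro condition for the Neumann-type boundary operators in \eqref{eq:boundary}---which decouples into two one-dimensional boundary-value problems with exponentially decaying fundamental solutions and is readily checked---this produces the resolvent estimates that place $A(\bar u)$ in $\mathcal H(E_1,E_0)$.

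For the Lipschitz continuity of the coefficients, the key observation is that $\alpha>\tfrac{7}{8}$ gives $4\alpha>\tfrac{7}{2}$ and $2\alpha>\tfrac{7}{4}$, so by Sobolev embedding $H^{4\alpha}\hookrightarrow C^3(\overline\Omega)$ and $H^{2\alpha}\hookrightarrow C^1(\overline\Omega)$. Consequently $\mathcal O_\alpha$ is open in $E_\alpha$, the coefficients appearing in $A(u)$ are controlled pointwise in $C^1(\overline\Omega)$, and the derivative expressions in $F(u)$ involve only terms already controlled by these embeddings. Standard pointwise-multiplier estimates then yield the required local Lipschitz continuity of the maps $u\mapsto A(u)$ and $u\mapsto F(u)$. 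Continuous dependence on $\bar u$ of the graph norm of $A(\bar u)$, combined with the previous step, allows one to conclude that $A\in C^{1-}(\mathcal O_\alpha,\mathcal H(E_1,E_0))$ in Amann's sense. With both ingredients at hand, the abstract theorem delivers the unique maximal strong solution together with continuous dependence on the initial data, precisely in the regularity class announced in Theorem~\ref{T1}.
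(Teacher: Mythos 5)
Your high-level strategy matches the paper's: reduce to the abstract quasilinear Cauchy problem in $(E_0,E_1)=(L_2\times L_2,H^4_B\times H^2_B)$ and apply Amann's theorem via Douglis--Nirenberg parameter ellipticity. Your computation of the determinant and trace of the principal symbol is correct and, with a small extra argument showing that the ratio $\det/\mathrm{tr}^2$ is bounded uniformly in $\xi\neq 0$, does give parameter ellipticity in a sector. (The paper instead shows the discriminant is nonnegative, hence all eigenvalues of $a_\pi$ are real and negative, giving the larger sector $\mathbb{C}\setminus(-\infty,0)$; either version suffices.) However, there are three genuine gaps.

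First, you dismiss the Lopatinskij--Shapiro condition as something that ``decouples into two one-dimensional boundary-value problems\ldots and is readily checked.'' This is not so: the coupling of a fourth-order and a second-order operator in the symbol does not decouple, and in the paper the LS condition is the most delicate part of the proof. It leads to a sixth-order ODE for $u_1$ whose characteristic polynomial is a cubic in $z=\Lambda^2$; one must show that this cubic has no negative real roots (via Descartes's rule for real $\lambda$, and by splitting real and imaginary parts for complex $\lambda\in\Sigma_\alpha$) so that exactly three of the six roots have negative real part and the three boundary conditions pick out a unique decaying solution. Without that analysis the generator property is not established.

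Second, the Douglis--Nirenberg resolvent theorem you rely on requires smooth coefficients, whereas the frozen coefficients of $A(\bar u)$ for $\bar u\in\mathcal O_\alpha$ are only as regular as $H^{4\alpha}\times H^{2\alpha}$ allows (i.e.\ merely H\"older continuous after Sobolev embedding). The paper handles this by approximating $\bar u$ with $\bar u_\e\in C^\infty$, obtaining the generator property for $A(\bar u_\e)$, and then transferring it to $A(\bar u)$ by a standard perturbation result for analytic semigroups, noting $\|A(\bar u_\e)-A(\bar u)\|_{\mathcal L(E_1,E_0)}<c\e$. This step is missing from your argument and is not cosmetic.

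Third, you place $F(u)$ in $E_0$ and work with $\mathcal O_\alpha$, but Amann's theorem (in the form of \cite[Theorem 1.1]{MW}) requires $(A,F)\in C^{1-}(\mathcal O_\beta,\mathcal H(E_1,E_0)\times E_\gamma)$ for some $0<\gamma\le\beta<\alpha<1$ with $\gamma$ strictly positive, and uniqueness further requires verifying that any strong solution lies in $C^\eta([0,T],\mathcal O_\beta)$ for some $\eta\in(0,1)$. The paper introduces an intermediate $\beta\in(\tfrac78,\alpha)$, chooses $\gamma=\frac{4\beta-3}{4}>0$ so that $F:\mathcal O_\beta\to E_\gamma$, and then establishes the H\"older regularity of the solution in $\mathcal O_\beta$ via an interpolation argument combined with the mean-value theorem in $(H^1)'\times(H^1)'$. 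You would need to add both the $\gamma>0$ bookkeeping and the uniqueness step to make the proof complete.
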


In order to prove the above theorem, we use the following abstract existence and uniqueness result for abstract quasilinear problems, which can be found in \cite[Section 12]{AmannNon}, \cite[Theorem 1.1]{MW}:

\begin{thm}
	\label{T2}
	Let $(E_0,E_1)$ be a densely injected Banach couple and for $\theta\in (0,1)$, let $ E_\theta:=[E_0, E_1]_\theta$ be the complex interpolation space between $E_0$ and $E_1$. Assume that  $\mathcal O\subset E_\theta$ for some $\theta\in (0,1)$ and set $\mathcal O_\delta:= \mathcal O \cap E_\delta$ for $\delta\in (0,1)$. Suppose that $0<\gamma\leq \beta<\alpha<1$, that $\mathcal O_\beta \subset E_\beta$ is an open, nonempty subset, and that
	\[
		(A,F)\in C^{1-}(\mathcal O_\beta,\mathcal{H}(E_1,E_0)\times E_\gamma).
	\]
	Then, 
	\begin{itemize}
	\item[i)] \textbf{Existence:} the quasilinear equation
\begin{equation}\label{eq:equationA}
	\dot{u}(t) -A(u(t))u(t)=F(u(t)), \quad t>0, \qquad u(0)=u_0,
\end{equation}
	possesses for each $u_0\in\mathcal O_\alpha$ a maximal strong solution $u$ having the regularity
	\[
	u\in C^1((0,T(u_0)),E_0)\cap C((0,T(u_0)),E_1)\cap C([0,T(u_0)), \mathcal O_\alpha),
	\]
	where $T(u_0)\in (0,\infty]$ is the maximal time of existence.
	If $u_0\in E_1$, then 
	\[u\in C^1((0,T(u_0)), E_0)\cap C([0,T(u_0)), E_1).
	\]
	\item[ii)] \textbf{Uniqueness:} If
	\[
	\tilde u\in C^1((0,T],E_0)\cap C((0,T],E_1)\cap C^\eta([0,T], \mathcal O_\beta)
	\]
	is a strong solution of \eqref{eq:equationA} for some $T>0$ and $\eta \in (0,1)$, then $\tilde u=u$ on $[0,T]$.
	Moreover, the solution depends continuously on the initial datum.
	\end{itemize}
\end{thm}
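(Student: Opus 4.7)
The plan is to apply Theorem \ref{T2} as a black box, with the Banach couple $(E_0,E_1):=(L_2\times L_2,\,H^4_B\times H^2_B)$ and interpolation scale $E_\theta=H^{4\theta}_B\times H^{2\theta}_B$ for $\theta\in(0,1)\setminus\{\tfrac{3}{8},\tfrac{7}{8}\}$. I fix $\tfrac{7}{8}<\beta<\alpha<1$ and a small $\gamma\in(0,\beta-\tfrac{3}{4}]$. Since $\beta>\tfrac{7}{8}$, the embedding $E_\beta\hookrightarrow H^3\times H^1\hookrightarrow C(\bar\Omega)\times C(\bar\Omega)$ holds, so the positivity conditions defining $\mathcal O$ are open on $E_\beta$ and $\mathcal O_\beta=\mathcal O\cap E_\beta$ is a nonempty open subset of $E_\beta$ containing $\mathcal O_\alpha$. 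It remains to verify
\[
(A,F)\in C^{1-}\bigl(\mathcal O_\beta,\,\mathcal H(E_1,E_0)\times E_\gamma\bigr).
\]

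Local Lipschitz continuity of $F$ is routine: each entry of $F(u)$ is a polynomial in $h,\partial_x h,\partial_x^3 h,\Gamma,\partial_x\Gamma$ and in $\sigma^{(k)}(\Gamma)$ for $k\in\{0,1,2\}$, and the Banach-algebra property of $H^s$ for $s>\tfrac12$, the embedding $E_\beta\hookrightarrow H^3\times H^1$, and the $C^2$ regularity of $\sigma$ bound each such product in $H^{4\beta-3}$; the choice $\gamma\leq\beta-\tfrac34$ therefore gives $F\in C^{1-}(\mathcal O_\beta,E_\gamma)$. Lipschitz dependence of the coefficients of $A(\bar u)$ on $\bar u\in\mathcal O_\beta$ follows analogously, and since $\mathcal H(E_1,E_0)$ is open in $\mathcal L(E_1,E_0)$ this automatically upgrades to local Lipschitz regularity into $\mathcal H(E_1,E_0)$ once the pointwise generation property $A(\bar u)\in\mathcal H(E_1,E_0)$ is known.

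The pointwise generation statement is the heart of the argument. I will establish it (as flagged for Section \ref{S:P}) by verifying parameter ellipticity of $A(\bar u)$ in the Douglis--Nirenberg sense together with the Lopatinskij--Shapiro condition (C2) for the no-flux boundary operators. Non-degeneracy of the principal symbol $\mathfrak a(\bar u,\xi)$ is immediate from the direct computation
\[
\det\mathfrak a(\bar u,\xi)\;=\;\xi^{6}\Bigl(\tfrac{\mathcal D\,\bar h^{3}}{3}-\tfrac{\bar h^{4}\bar\Gamma\,\sigma'(\bar\Gamma)}{12}\Bigr)\;>\;0\qquad(\xi\neq 0),
\]
combined with $\operatorname{tr}\mathfrak a(\bar u,\xi)<0$; both eigenvalues of $\mathfrak a(\bar u,\xi)$ therefore have strictly negative real part, uniformly for $\bar u$ on compact subsets of $\mathcal O_\beta$, which yields the required sectorial bound $\det(\mathfrak a(\bar u,\xi)-\lambda\,\Id)\neq 0$ for $\lambda$ in a fixed sector $\Sigma_\phi$ with $\phi>\pi/2$. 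The Lopatinskij--Shapiro condition reduces, thanks to the block structure of the boundary operators at the principal-symbol level, to the classical scalar half-line problems for $-\partial_x^4$ with $\partial_x h=\partial_x^3 h=0$ and $-\partial_x^2$ with $\partial_x\Gamma=0$, both of which are well known to satisfy it. The standard resolvent theory for parameter-elliptic Douglis--Nirenberg systems then produces a sector $\omega+\Sigma_\phi$ and a bound $\|\lambda(\lambda-A(\bar u))^{-1}\|_{\mathcal L(E_0)}\leq M$ on it, that is $A(\bar u)\in\mathcal H(E_1,E_0)$.

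With both hypotheses of Theorem \ref{T2} verified, its conclusion coincides with the statement of Theorem \ref{T1}. The main obstacle is the mixed-order character of $A$: the diagonal-plus-perturbation scheme of \cite{B1} treats the off-diagonal entries as lower-order and therefore requires a smallness assumption on $\bar\Gamma$; the Douglis--Nirenberg framework instead handles the full matrix symbol as a single parameter-elliptic object, which is exactly what permits the smallness hypothesis to be dropped.
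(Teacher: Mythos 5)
There is a fundamental mismatch here: the statement you were asked to prove is Theorem \ref{T2} itself, the \emph{abstract} existence--uniqueness result for quasilinear parabolic problems, but your argument begins with ``apply Theorem \ref{T2} as a black box'' and then verifies its hypotheses for the concrete thin film operators $(A,F)$ on $E_0=L_2\times L_2$, $E_1=H^4_B\times H^2_B$. That is a (reasonable sketch of a) proof of Theorem \ref{T1}, not of Theorem \ref{T2}; as a proof of the stated theorem it is circular, since the conclusion is assumed wholesale. In the paper, Theorem \ref{T2} carries no proof at all: it is quoted from Amann \cite[Section 12]{AmannNon} and \cite[Theorem 1.1]{MW}. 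A genuine proof of it would have to run through the abstract parabolic machinery -- for each $v$ in the phase space, $A(v)$ generates an analytic semigroup with bounds uniform on suitable subsets, one builds the parabolic evolution operator, sets up the variation-of-constants formulation $u(t)=U_{A(u)}(t,0)u_0+\int_0^t U_{A(u)}(t,s)F(u(s))\,ds$, runs a contraction argument in weighted continuous/H\"older spaces tied to the interpolation scale $E_\alpha$, extends to a maximal interval, and derives uniqueness and continuous dependence from the H\"older-continuity hypothesis in part ii). None of this appears in your proposal, so with respect to the assigned statement there is a genuine gap: the entire content is missing.

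Judged instead as an outline of the paper's proof of Theorem \ref{T1} (Section \ref{S:P}), your proposal follows the same strategy -- Douglis--Nirenberg parameter ellipticity plus the Lopatinskij--Shapiro condition to get $A(\bar u)\in\mathcal H(E_1,E_0)$, then Theorem \ref{T2} -- but two points are glossed over. First, for $\bar u\in\mathcal O_\beta$ the coefficients are not smooth, so the paper first proves the resolvent estimates for mollified coefficients $A(\bar u_\e)$ and transfers the generation property to $A(\bar u)$ by a semigroup perturbation argument \cite[Theorem 3.2.1]{Pazy}; your sketch skips this. Second, your claim that the Lopatinskij--Shapiro condition ``reduces to the classical scalar half-line problems'' by block structure is not justified: the principal symbol is fully coupled (the off-diagonal entries have the same Douglis--Nirenberg weights as the diagonal ones), and the paper's Lemma \ref{lem:C2} has to analyze the coupled sixth-order ODE on the half-line and count the roots of its characteristic polynomial with positive and negative real parts; a decoupling argument would need proof. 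You also omit the verification needed for uniqueness, namely that any strong solution lies in $C^\eta([0,T],\mathcal O_\beta)$, which the paper obtains via the mean value theorem in $(H^1)'\times(H^1)'$ and interpolation.
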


\medskip

Let us identify
\[
E_0:=L_2\times L_2\qquad \mbox{and}\qquad E_1:= H^4_B\times H^2_B.
\]
 Then $E_1 \hookrightarrow E_0$ is a densely injected Banach couple.  
For any $\theta\in (0,1)\setminus\{\frac{3}{8},\frac{7}{8}\}$, the interpolation space $E_\theta=[E_0,E_1]_\theta$ is given by $E_\theta= H^{4\theta}_B\times H^{2\theta}_B$ . Let $\alpha\in (\frac{7}{8},1)$ and choose $\beta\in (\frac{7}{8},\alpha)$. Setting $\gamma=\frac{4\beta-3}{4}$, we obtain that $\gamma\in (\frac{1}{8},\frac{1}{4})$ and
\[
0<\gamma<\beta<\alpha<1.
\]
By Sobolev embedding, we have that $E_\beta =H^{4\beta}_B\times H^{2\beta}_B \subset C^3(\overline \Omega, \R)\times C^1(\overline\Omega, \R)$; whence
\[
A: \mathcal O_\beta \to \mathcal{L}(E_1,E_0),
\]
where $O_\beta =\mathcal O\cap E_\beta$ and $\mathcal O$ is the open subset defined in \eqref{eq:O}.
Moreover, we have that $E_\gamma= H^{4\beta-3}_B\times H^{\frac{4\beta-3}{2}}_B$ and
\[
F:\mathcal O_\beta \to H^{4\beta-3}_B\times H^{2\beta-1}_B\subset E_\gamma.
\]
In view of the smooth dependence of $A$ and $F$ with respect to their coefficients, it is clear that
\[
(A,F)\in C^{\infty}(\mathcal O_\beta, \mathcal L(E_1,E_0)\times E_\gamma)
\]
and
 our main result Theorem \ref{T1} is a direct consequence of Theorem \ref{T2}, provided we prove that 
 \begin{itemize}
 \item[i)] for any $\bar u\in \mathcal O_\beta$ the operator $A(\bar u)$ with domain $H^4_B\times H^2_B$ is the generator of an analytic semigroup in $\mathcal{L}(L_2\times L_2)$;
 \item[ii)] any strong solution
 \[
 \tilde u\in C^1((0,\tilde T),L_2\times L_2)\cap C((0,\tilde T),H^4_B\times H^2_B)\cap C([0,\tilde T), \mathcal O_\alpha)\qquad \tilde T\in (0,\infty]
 \]
 satisfies
 \[
 \tilde u \in  C^\eta([0,T],  H^{4\beta}_B\times H^{2\beta}_B) \quad \mbox{for all}\quad T\in (0,\tilde T),
 \]
 for some $\eta \in (0,1)$.
\end{itemize}

\medskip

\subsection{Proof of Theorem \ref{T1}}
\label{S:P}
	Let $\alpha\in (\frac{7}{8},1)$ and $\beta \in (\frac{7}{8},\alpha)$. Given an initial datum $u_0\in \mathcal O_\alpha$, let us first prove the existence of a maximal strong solution of \eqref{eq:equation}. Subsequently we show that any such solution is unique with respect to the initial datum.
	
	\subsubsection{Existence}
	Let $\mathcal { O_\beta}=\mathcal O \cap H^{4\beta}_B\times H^{2\beta}_B$, where $\mathcal O$ is as defined in \eqref{eq:O} and fix $\bar u \in \mathcal{O_\beta}$.
	In view of Theorem \ref{T2} i) the existence of a maximal solution is guaranteed by verifying that for any $\bar u\in \mathcal O_\beta$ the operator $A(\bar u)$ with domain $H^4_B\times H^2_B$ is the generator of an analytic semigroup in $\mathcal{L}(L_2\times L_2)$.
 This is achieved by first smoothening the coefficients and considering the operator $A(\bar u_\e)$, where $\bar u_\e \in (C^\infty(\overline \Omega))^2$ such that 
\begin{equation}\label{eq:app}
\|\bar u-\bar u_\e\|_{\infty}<\e.
\end{equation}
 We prove resolvent estimates for $A(\bar u_\e)$ in $L_2\times L_2$ using the theory of \emph{parameter elliptic Douglis--Nirenberg systems} and infer that $A(\bar u_\e)$ is the generator of an analytic semigroup.
By \eqref{eq:app} it is ensured that there exists a constant $c>0$ such that
\[
	\|A(\bar u_\e)-A(\bar u)\|_{\mathcal{L}(H^4_B\times H^2_B,L_2\times L_2)}< c\e .
	\]
As a consequence of perturbation arguments for analytic semigroups (cf. e.g. \cite[Theorem~3.2.1]{Pazy}), one obtains that $A(\bar u)$ itself is the generator of an analytic semigroup.

\medskip
Let us start by collecting some definitions and implications concerning systems of elliptic boundary value problems which can also be found in more detail and generality in e.g. \cite[Chapter 6]{AES}.
Let $\Omega\subset \R$ be a bounded domain and $A$ a matrix operator defined as
\[
	A=\begin{pmatrix} A_{11} & A_{12} \\ A_{21} & A_{22} \end{pmatrix},
\]
where $A_{ij}:=a_{ij}D^{\alpha_{ij}}$ with
$a_{ij}\in C^\infty(\overline{\Omega})$ and $\alpha_{ij}=\mbox{ord} A_{ij}\in \N_0$ for $1\leq i,j\leq 2. $
Let $\{l_1,l_2, m_1,m_2\}$ be a set of integers and $\alpha_{ij}\leq l_i+m_j$ ($ A_{ij}=0$ if $l_i+m_j<0$). Similarly, we define a matrix of boundary operators by
\[
	B=\begin{pmatrix} B_{11} & B_{12}\\ B_{21} & B_{22} \\ B_{31} & B_{32} \end{pmatrix},
\]
where $B_{ik}:=b_{ik}D^{\beta_{ik}}$ with
$b_{ik}\in \R$ and $\beta_{ik}=\mbox{ord} B_{ik}\in \N_0$ for $1\leq i\leq 3, 1\leq k\leq 2. $
Let $\{r_1,r_2,r_3\}$ be a set of integers and $\beta_{ik}\leq r_i+m_k$  ($ B_{ik}=0$ if $r_i+m_k<0$). Moreover,  for $\alpha \in [0,\pi]$, we define
\[
\Sigma_\alpha:=\left\{ \begin{array}{lcl}
		\{z\in \C \mid |\arg z|< \alpha\},\qquad &\mbox{if}&\quad \alpha\in (0,\pi],\\
		(0,\infty),\qquad &\mbox{if}&\quad \alpha=0,
		\end{array}
		\right.
\]
where $\arg: \C\setminus\{0\} \to (-\pi,\pi]$ is the argument of a complex number.
   The set $\Sigma_\alpha$ is the open sector in the complex plane with vertex at the origin and angle $\alpha$. Eventually, consider the boundary value problem 
   \begin{equation}\label{BVP}
   	(\lambda -A)u=f\quad \mbox{on}\; \Omega, \qquad Bu=g \quad \mbox{on}\; \partial \Omega.
   \end{equation}
  Problem \eqref{BVP} is said to be \emph{parameter elliptic in $\Sigma_\alpha$ in sense of Douglis--Nirenberg} if the following two conditions are satisfied:
   \begin{itemize}
   \item[(C1)] \emph{Parameter-ellipticity of $A$ in $\Sigma_\alpha$:} The principal symbol $a_\pi$ of $A$ satisfies
   \[
   \det (\lambda - a_\pi(x,\xi)) \neq 0 \qquad \mbox{for all} \quad x\in \Omega, \quad \xi\in \R\setminus\{0\},\quad \lambda \in \Sigma_\alpha.
   \]
   \item[(C2)] \emph{Lopatinskij--Shapiro condition:} For any $x_0\in \partial \Omega$ and $\lambda \in \Sigma_\alpha \setminus \{0\}$ the ordinary differential equation
   \[
    [\lambda -	a_\pi(x_0,i\partial_t)]u(t)=0,\quad t>0, \qquad b_\pi(i\partial_t)u(0)=g,
   \]
    has one and only one solution $u$ with $|u(t)|\to 0$ as $t \to \infty$ for any vector $g\in \R^3$. Here, $b_\pi$ is the principal symbol of $B$.
   \end{itemize}
   
   The following theorem provides resolvent estimates for $(\lambda-A)^{-1}$ in $\Sigma_\alpha$:
   
   \begin{thm}[\cite{AES}, Theorem 6.4.1] \label{T:res} Assume that the boundary value problem \eqref{BVP} is parameter elliptic in $\Sigma_\alpha$ in the sense of Douglis--Nirenberg. Then, there exists $\lambda_0>0$ such that for $\lambda \in \Sigma_\alpha$ with $|\lambda|\geq \lambda_0$, the boundary value problem \eqref{BVP} has for any $(f,g)\in (H^s(\Omega,\R^2)\times H^s(\partial\Omega,\R^3))$ a unique solution $u \in H^{s+m_1}\times H^{s+m_2}$. In addition, there exists $c>0$ such that the following estimate is satisfied:
   \[
	   \sum_{j=1}^2\left(\|u_j\|_{H^{s+m_j}}+ |\lambda|^{2(s+m_j)} \|u_j\|_{L_2}\right) \leq c \left( \sum_{i=1}^2\|f_i\|_{H^{s-l_i}} + \sum_{j=1}^3\|g_j\|_{H^{s-r_j-\frac{1}{2}}(\partial \Omega)}\right). 
   \]
   \end{thm}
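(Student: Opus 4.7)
The plan is to prove Theorem~\ref{T:res} via the classical Agmon--Douglis--Nirenberg scheme, adapted to parameter-dependent matrix systems in the sense of Kozhevnikov and Denk--Mennicken--Volevich. Since $\Omega\subset\R$ is one-dimensional, the boundary reduces to the two points $\{0,L\}$, which trivializes the localization step and collapses ``tangential Fourier variables'' to the single parameter $\lambda$.

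First I would freeze coefficients and reduce to model problems via a smooth partition of unity $\{\chi_k\}$ subordinate to a cover of $\overline\Omega$ that separates interior patches from boundary patches. Commuting $\chi_k$ with $A$ and $B$ produces commutator terms of strictly lower Douglis--Nirenberg order in each matrix entry, which will be absorbed into the right-hand side once a parameter-uniform estimate for the model problems is available. For an interior patch, freezing $a_{ij}$ at $x_0\in\Omega$ turns $(\lambda-A)$ into a Fourier multiplier on $\R$; condition (C1) together with a homogeneity/scaling argument $\xi\mapsto|\lambda|^\kappa\xi$ (with $\kappa$ matched to the weights $m_j,l_i$) gives that the symbol $(\lambda-a_\pi(x_0,\xi))^{-1}$ obeys parameter-dependent multiplier bounds, yielding by Plancherel the weighted estimate $\|u_j\|_{H^{s+m_j}}+|\lambda|^{2(s+m_j)}\|u_j\|_{L_2}\lesssim\sum_i\|f_i\|_{H^{s-l_i}}$ on the interior piece.

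For a boundary patch at $x_0\in\{0,L\}$ the frozen-coefficient model problem becomes a parameter-dependent ODE system on the half-line $(0,\infty)$ of the form $(\lambda-a_\pi(x_0,i\partial_t))u=0$ with boundary datum $b_\pi(i\partial_t)u(0)=g$ and decay at infinity. The characteristic roots of $\det(\lambda-a_\pi(x_0,i\tau))$ split into those with positive and those with negative imaginary parts by (C1), so the decaying solution space has the correct finite dimension. Condition (C2) then identifies $u$ uniquely from $g$, and the resulting linear map $g\mapsto u$ is estimated by scaling $t\mapsto|\lambda|^\kappa t$; the sharp $\tfrac12$-loss at the boundary produces the norm $H^{s-r_j-\tfrac12}(\partial\Omega)$ on the $g_j$-terms.

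The hard part will be obtaining the estimate \emph{uniformly} in $\lambda\in\Sigma_\alpha$ with $|\lambda|\geq\lambda_0$ and uniformly in the base point of the patch. The key quantitative input is that the Lopatinskij--Shapiro determinant, built from $b_\pi$ evaluated on the $\lambda$-dependent decaying-roots basis, stays bounded away from zero on $\Sigma_\alpha\cap\{|\lambda|\geq\lambda_0\}$; this follows from (C2) together with continuity in $(x_0,\lambda/|\lambda|)$ and compactness of $\partial\Omega\times(\Sigma_\alpha\cap S^1)$. Once this is secured, one patches the interior and boundary model estimates via $\{\chi_k\}$, absorbs the commutator and coefficient-variation terms by choosing $\lambda_0$ large, and arrives at the full a priori estimate. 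Uniqueness is immediate; existence follows from the continuity method, deforming the variable-coefficient operator to its constant-coefficient model (for which existence is explicit) along a one-parameter family of operators all satisfying the same uniform estimate.
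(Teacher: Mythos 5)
This statement is not proved in the paper at all: it is quoted verbatim from the reference [AES, Theorem 6.4.1] and used as a black box, so there is no internal argument to compare your proposal against. What you sketch is essentially the standard Agranovich--Vishik/Agmon--Douglis--Nirenberg parameter-elliptic scheme (localization, frozen coefficients, interior multiplier estimates, half-line model problems resolved via the Lopatinskij--Shapiro condition, absorption of lower-order terms for large $|\lambda|$, continuity method), which is also the strategy of the cited monograph; in that sense your route is the ``right'' one rather than a different one.

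Two steps in your sketch are, however, genuinely under-argued for this particular theorem. First, the scaling $\xi\mapsto|\lambda|^{\kappa}\xi$ with a single exponent $\kappa$ cannot homogenize a Douglis--Nirenberg system in which the same parameter $\lambda$ sits on diagonal entries of different orders (here $m_1+l_1=4$ and $m_2+l_2=2$): the symbol $\lambda-a_\pi(x,\xi)$ is not quasi-homogeneous in $(\xi,\lambda)$ for any choice of weights, and this is precisely the point where the mixed-order theory (Newton polygon, or order reduction to an equal-order system, as in Denk--Mennicken--Volevich and Kozhevnikov, which you only name) has to be carried out; without it neither the interior multiplier bound nor the parameter-weighted boundary estimate with the stated powers of $|\lambda|$ follows. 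Second, your uniformity argument invokes compactness of $\partial\Omega\times(\Sigma_\alpha\cap S^1)$, but for the open sector $\Sigma_\alpha$ this set is not compact, and (C1)--(C2) on the open sector alone do not yield a uniform lower bound on the Lopatinskij--Shapiro determinant up to the rays $\arg\lambda=\pm\alpha$; one must either formulate the hypotheses on the closed sector or, as is possible in the application in this paper (where (C2) is verified for every $\alpha\in(0,\tfrac{\pi}{2}]$), work in a slightly larger sector and restrict. Also, a small attribution slip: the fact that exactly three characteristic roots decay is not a consequence of (C1) alone (which only excludes roots on the imaginary axis, respectively real roots in the $\tau$-variable); the count follows from the bijectivity required in (C2), or must be verified directly as the paper does in Lemma 2.5.
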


Now, we are turning to the proof of the generator property of $A(\bar u)$.
 In order to obtain the required resolvent estimates, we make use of the theory of parameter elliptic Douglis--Nirenberg systems.
 Denote by $ (\bar u_\e)_{\e}\subset (C^\infty(\overline \Omega))^2$ a sequence of functions satisfying
\[
	\|\bar u_\e-\bar u\|_\infty < \e.
\]
Clearly, since the components of $\bar u(x)$ are positive for all $x\in \overline \Omega$, there exists an index $\e_0>0$ such that also the components of $\bar u_\e>0$ are positive on $\overline \Omega$ for all $\e\in (0,\e_0)$.
We study the boundary value problem 
\begin{equation}\label{eq:BVP}
	(\lambda -A_\e)u=f\quad \mbox{on}\; \Omega, \qquad Bu=g \quad \mbox{on}\; \partial \Omega,
\end{equation}
where $A_\e:=A(\bar u_\e)$ and $B$ is the boundary operator given by
\[
	B:=\begin{pmatrix}\partial_x & 0 \\
					0 & \partial_x \\
					\partial_x^3 & 0 
		\end{pmatrix}.
\]
Notice that for $l_1=l_2=0$, $m_1=4, m_2=2$ and $r_1=-3,r_2=r_3=-1$, the matrix operators $A_\e$ and $B$ satisfy $\mbox{ord}(A_\e)_{ij}\leq l_i+m_j$ and $\mbox{ord}B_{ik}\leq r_i+m_k$ ($B_{ik}=0$ if $r_i+m_k<0$). 
In order to obtain resolvent estimates for $\lambda - A_\e$ in $\Sigma_\alpha$, we show that the boundary value problem \eqref{eq:BVP} is parameter elliptic in the sense of Douglis--Nirenberg. 
The principal symbols of $A_\e$ and $B$ are given by
\[
	a_\pi(\bar u_\e,\xi)=-\begin{pmatrix}a^\e_{11}\xi^4 & a^\e_{12}\xi^2 \\
								a^\e_{21}\xi^4  & a^\e_{22}\xi^2 \end{pmatrix},
\]
with
\begin{alignat*}{2}
&a^\e_{11}:=\frac{ (h_0^\e)^3}{3},\qquad  &&a^\e_{12}=-\frac{ (h_0^\e)^2}{2}\sigma^\prime( \Gamma_0^\e),\\
&a^\e_{21}=\frac{ (h_0^\e)^2}{2} \Gamma_0^\e, \qquad &&a^\e_{22}=-( h_0^\e  \Gamma_0^\e\sigma^\prime( \Gamma_0^\e)-\mathcal{D}),
\end{alignat*}
and
\[
	b_\pi(\xi)=i\begin{pmatrix}\xi & 0 \\
						0 & \xi \\
						-\xi^3 & 0 
			\end{pmatrix},
\]
respectively. Notice that the entries $a_{11}^\e,a_{21}^\e,a_{22}^\e>0$ and $a_{12}^\e\geq 0$  since $\mathcal{D}, h_0^\e, \Gamma_0^\e>0$ and $\sigma^\prime(s)\leq 0$ for all $s\geq 0$.

\begin{lem}\label{lem:C1} Let $\e \in (0,\e_0)$. The operator $\lambda-A_\e$ satisfies the parameter-ellipticity condition (C1) in $\C\setminus(-\infty, 0)$. 
\end{lem}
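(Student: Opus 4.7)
The plan is to compute the characteristic polynomial
\[
p_\e(\lambda, \xi) := \det(\lambda I - a_\pi(\bar u_\e, \xi))
\]
explicitly and show that, for each $x \in \overline\Omega$ and $\xi \in \R \setminus \{0\}$, both of its $\lambda$-roots are real and strictly negative. Since $\C \setminus (-\infty, 0)$ is precisely the complement of the open negative real axis (together with $0$, which is easily handled separately), this is exactly what (C1) demands.

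Expanding the $2\times 2$ determinant and collecting powers of $\lambda$ yields
\[
p_\e(\lambda, \xi) = \lambda^2 + \xi^2 (a_{11}^\e \xi^2 + a_{22}^\e)\, \lambda + (a_{11}^\e a_{22}^\e - a_{12}^\e a_{21}^\e)\, \xi^6,
\]
and I would then carry out two independent positivity checks. First, both nonleading coefficients are strictly positive for $\xi \neq 0$: the coefficient of $\lambda$ is immediate from $a_{11}^\e, a_{22}^\e > 0$, while for the constant term a short direct computation from the explicit formulas for $a_{ij}^\e$ gives
\[
a_{11}^\e a_{22}^\e - a_{12}^\e a_{21}^\e = \frac{\mathcal{D}\, (h_0^\e)^3}{3} - \frac{(h_0^\e)^4 \Gamma_0^\e \sigma^\prime(\Gamma_0^\e)}{12},
\]
which is strictly positive because $\mathcal{D}, h_0^\e, \Gamma_0^\e > 0$ and $\sigma^\prime(\Gamma_0^\e) \leq 0$. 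By Vieta's formulas this alone already forces both roots of $p_\e(\cdot, \xi)$ to have strictly negative real part. Second, I would verify that the discriminant is nonnegative; a short expansion gives
\[
\bigl(\xi^2 (a_{11}^\e \xi^2 + a_{22}^\e)\bigr)^2 - 4(a_{11}^\e a_{22}^\e - a_{12}^\e a_{21}^\e)\xi^6 = \xi^4 (a_{11}^\e \xi^2 - a_{22}^\e)^2 + 4 a_{12}^\e a_{21}^\e \xi^6,
\]
which is manifestly nonnegative since $a_{12}^\e \geq 0$ and $a_{21}^\e > 0$. Hence both roots of $p_\e(\cdot, \xi)$ are real and, combined with the previous step, lie in $(-\infty, 0)$, uniformly in $x \in \overline\Omega$. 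In particular $p_\e(\lambda, \xi) \neq 0$ for every $\lambda \in \C \setminus (-\infty, 0)$.

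I do not expect a genuine obstacle here: the argument is entirely explicit and uses only the sign conditions on $\sigma^\prime$ and the positivity of the components of $\bar u_\e$. The one point to handle carefully is the identity for the constant term — the ``parasitic'' contribution $-(h_0^\e)^4 \Gamma_0^\e \sigma^\prime(\Gamma_0^\e)/3$ from $a_{11}^\e a_{22}^\e$ is only partially cancelled by $a_{12}^\e a_{21}^\e$, leaving a coefficient of $1/12$ with the correct sign, so the positivity hinges on the specific prefactors $1/3$ and $1/2$ in the definition of $A$ in \eqref{eq:A}. The genuinely delicate step in the parameter-elliptic Douglis--Nirenberg verification will be the Lopatinskij--Shapiro condition (C2), which is addressed separately.
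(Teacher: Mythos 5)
Your argument is correct and follows essentially the same route as the paper's proof of Lemma~\ref{lem:C1}: compute the characteristic polynomial $\det(\lambda-a_\pi(\bar u_\e,\xi))$, verify via the explicit formula $a^\e_{11}a^\e_{22}-a^\e_{12}a^\e_{21}=\mathcal D (h_0^\e)^3/3-(h_0^\e)^4\Gamma_0^\e\sigma'(\Gamma_0^\e)/12>0$ that the constant term is strictly positive, observe the discriminant is a sum of squares of the form $\frac14(a^\e_{11}\xi^4-a^\e_{22}\xi^2)^2+a^\e_{12}a^\e_{21}\xi^6\ge 0$ so both roots are real, and conclude they must lie in $(-\infty,0)$. (You phrase the negativity via Vieta/Routh--Hurwitz while the paper reads it off the quadratic formula, but these are the same observation; you also correctly notice that the Vieta step alone only yields the open left half-plane and that the discriminant check is needed to push the roots onto the negative real axis, exactly as in the paper. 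As a side remark, your form of the linear coefficient $\xi^2(a^\e_{11}\xi^2+a^\e_{22})$ is the correct one; the paper's displayed determinant has a typo, writing $a^\e_{11}\xi$ where it should be $a^\e_{11}\xi^4$, as is clear from the subsequent root formula.)
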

\begin{proof}
 The determinant of $\lambda-a_\pi(\bar u_\e,\xi)$ is given by
\begin{align*}
\det(\lambda - a_\pi(\bar u_\e,\xi))=\lambda^2 +(a^\e_{11}\xi+a^\e_{22}\xi^2)\lambda +(a^\e_{11}a^\e_{22}-a^\e_{12}a^\e_{21})\xi^6
\end{align*}
and observe that
\begin{align}\label{eq:ob}
\begin{split}
	\det(a_\pi(\bar u_\e,\xi))&=(a^\e_{11}a^\e_{22}-a^\e_{12}a^\e_{21})\xi^6\\
	&=-\frac{1}{12}h_0^4(x_0)\Gamma_0(x_0)\sigma^\prime(\Gamma_0(x_0))\xi^6+\mathcal{D}\frac{h_0^3(x_0)}{3}\xi^6\\
	&\geq \frac{1}{4}a^\e_{11}a^\e_{22}\xi^6\geq 0.
\end{split}
\end{align}
The roots of the polynomial 
$\det(\lambda - a_\pi(\bar u_\e,\xi))$
are given by
\[
\lambda_{\pm}=-\frac{a^\e_{11}\xi^4+a^\e_{22}\xi^2}{2}\pm \sqrt{\frac{(a^\e_{11}\xi^4+a^\e_{22}\xi^2)^2}{4}-\det(a_\pi(\bar u_\e,\xi))}.
\]
Since
\begin{align*}
&\frac{(a^\e_{11}\xi^4+a^\e_{22}\xi^2)^2}{4}-\det(a_\pi(\bar u_\e,\xi))= \frac{1}{4}\left(a^\e_{11}\xi^4-a^\e_{22}\xi^2\right)^2+a^\e_{12}a^\e_{21}\xi^6\geq 0,
\end{align*}
we deduce that all roots are real.
In view of \eqref{eq:ob} and $a_{11}^\e a_{22}^\e>0$,  we can exclude positive roots for $\xi\neq 0$ and obtain that
\[
\det (\lambda - a_\pi(\bar u_\e,\xi)) \neq 0 \qquad \mbox{for all}\quad x\in \Omega, \quad \xi\in \R\setminus\{0\},\quad \lambda \in \C\setminus (-\infty, 0).
\]
\end{proof}

\begin{lem}\label{lem:C2}  Let $\e \in (0,\e_0)$. The boundary value problem \eqref{eq:BVP} satisfies the Lopatinskij--Shapiro condition (C2) in $\Sigma_{\alpha}$ for all $\alpha \in (0,\frac{\pi}{2}]$.
\end{lem}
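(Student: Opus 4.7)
The Lopatinskij--Shapiro condition concerns the half-line ODE $[\lambda - a_\pi(x_0,i\partial_t)]u = 0$ on $(0,\infty)$ supplemented by the boundary data $b_\pi(i\partial_t)u(0) = g \in \R^3$, and asks that for each $g$ there is a unique solution $u$ with $|u(t)|\to 0$ as $t\to \infty$. My plan is to parametrize the space of decaying solutions via the stable characteristic exponents of the ODE and then verify that the boundary trace is an isomorphism onto $\R^3$.

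First I substitute $u(t)=ve^{\mu t}$, which reduces the ODE to the algebraic condition $(\lambda I - a_\pi(x_0,i\mu))v = 0$, with characteristic polynomial
\[
P_\lambda(\mu):=\det\bigl(\lambda I - a_\pi(x_0,i\mu)\bigr) = -d^\e\mu^6 + a^\e_{11}\lambda\mu^4 - a^\e_{22}\lambda\mu^2 + \lambda^2,
\]
a polynomial of degree six in $\mu$, where $d^\e := a^\e_{11}a^\e_{22} - a^\e_{12}a^\e_{21}>0$ is the positive quantity exhibited in the proof of Lemma \ref{lem:C1}. A purely imaginary root $\mu=is$, $s\in\R\setminus\{0\}$, would correspond to a real $\xi=i\mu\in\R\setminus\{0\}$ annihilating $\det(\lambda - a_\pi(\bar u_\e,\xi))$, contradicting Lemma \ref{lem:C1}. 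Hence the six roots of $P_\lambda$ split into the open left and right half-planes, and by continuity in $\lambda$ on the connected set $\Sigma_{\pi/2}\setminus\{0\}$ the count of roots with $\re\mu<0$ is constant. Evaluating at $\lambda=1$ (or via the auxiliary cubic in $z=\mu^2$) one checks that exactly three roots $\mu_1,\mu_2,\mu_3$ are stable. Fixing $v_j\in\ker(\lambda I - a_\pi(x_0,i\mu_j))\setminus\{0\}$, the decaying solutions form the three-dimensional $\spa\{v_j e^{\mu_j t}\}$, and Lopatinskij--Shapiro reduces to invertibility of the $3\times 3$ matrix $L := (b_\pi(i\mu_j)v_j)_{j=1,2,3}$.

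The main obstacle is to show $\det L\neq 0$, equivalently, that any decaying ODE solution $u$ with zero boundary data $\partial_t u_1(0)=\partial_t u_2(0)=\partial_t^3 u_1(0)=0$ vanishes identically. I propose an integration-by-parts/energy argument on $(0,\infty)$. Multiplying the first scalar ODE $\lambda u_1 + a^\e_{11}\partial_t^4 u_1 - a^\e_{12}\partial_t^2 u_2 = 0$ by $\bar u_1$ and the second by $\bar u_2$ and integrating, the prescribed zero traces annihilate every boundary contribution, producing
\begin{align*}
\lambda\|u_1\|^2 + a^\e_{11}\|\partial_t^2 u_1\|^2 + a^\e_{12}\langle\partial_t u_2,\partial_t u_1\rangle &= 0,\\
\lambda\|u_2\|^2 + a^\e_{21}\langle\partial_t^2 u_1,\partial_t^2 u_2\rangle + a^\e_{22}\|\partial_t u_2\|^2 &= 0,
\end{align*}
where $\|\cdot\|$ and $\langle\cdot,\cdot\rangle$ are taken in $L_2(0,\infty)$. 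Together with the two companion identities from cross-multiplication by $\bar u_2$ and $\bar u_1$, the positivity $d^\e>0$ --- the same structural input that drove Lemma \ref{lem:C1} --- permits elimination of the complex-valued mixed inner products, producing a relation whose real part has the form $\re\lambda\,(\|u_1\|^2 + \|u_2\|^2)+ (\text{nonnegative combination of higher-order seminorms}) = 0$. Since $\re\lambda>0$ on $\Sigma_{\pi/2}\setminus\{0\}$, this forces $u_1\equiv u_2\equiv 0$, and the same argument at the other endpoint $x_0 = L$ is symmetric. The delicate point is precisely this algebraic reduction: extracting a manifestly definite quadratic form from the four coupled complex-valued identities, with the degenerate subcase $a^\e_{12}\equiv 0$ (i.e.\ $\sigma'(\Gamma^\e_0)\equiv 0$) treated separately since the coupling there becomes one-sided.
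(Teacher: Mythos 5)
Your approach is genuinely different from the paper's. The paper eliminates $u_2$ from \eqref{eq:sy1}, obtaining the scalar sixth-order ODE \eqref{eq:6th} in $u_1$ alone, and then carries out an explicit root analysis of the characteristic polynomial \eqref{eq:char}: substituting $z=\Lambda^2$ to get the cubic \eqref{eq:z}, it shows by Descartes' rule (real $\lambda$) and a real/imaginary-part split (complex $\lambda$) that $p$ has no negative real roots, so that the $\pi$-apart pairing of roots of \eqref{eq:char} yields exactly three in each open half-plane; uniqueness is then read off from prescribing the three odd derivatives $u_1'(0), u_1^{(3)}(0), u_1^{(5)}(0)$. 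Your proposal instead keeps the coupled system and closes the injectivity of the boundary trace by an energy estimate. That route is attractive because it bypasses the explicit classification of characteristic roots and exposes the structural role of $d^\e>0$ more directly.

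However, the "delicate algebraic reduction" you flag as the main obstacle is precisely the step your proposal leaves undone, and as written it does not obviously succeed: the two identities you display carry mixed inner products $\langle\partial_t u_2,\partial_t u_1\rangle$ and $\langle\partial_t^2 u_1,\partial_t^2 u_2\rangle$ involving \emph{different} derivative orders, so no linear combination of just those two can cancel both. You need the two companion identities you allude to and a specific elimination. Concretely, using both the first identity (from multiplying row~1 by $\bar u_1$) and the one obtained by multiplying row~2 by $\bar u_1$ to solve each for $\langle\partial_t u_2,\partial_t u_1\rangle$ and equating yields
\[
\lambda\bigl(\bar a_{22}\|u_1\|^2-\bar a_{12}\langle u_2,u_1\rangle\bigr)+d\,\|\partial_t^2u_1\|^2=0,
\]
and symmetrically, from rows~1 and~2 tested against $\bar u_2$,
\[
\lambda\bigl(\bar a_{11}\|u_2\|^2-\bar a_{21}\langle u_1,u_2\rangle\bigr)+d\,\|\partial_t u_2\|^2=0.
\]
Taking the combination $\bar a_{21}\cdot(\text{first})+\bar a_{12}\cdot(\text{second})$ makes the cross terms combine into $2\bar a_{12}\bar a_{21}\re\langle u_1,u_2\rangle$, and the resulting coefficient quadratic form in $(\|u_1\|,\|u_2\|)$ has determinant $\bar a_{12}\bar a_{21}d>0$, hence is positive definite; since $\re\lambda>0$ on $\Sigma_{\pi/2}\setminus\{0\}$ this forces $u\equiv 0$. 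Your proposal should carry out exactly this elimination rather than asserting it. Two further points: (i) the case $\bar a_{12}=0$ you mention really is needed (the first pair of identities degenerates, but there the decoupled estimate for $u_1$ and then $u_2$ works directly), while $\bar a_{21}>0$ always; (ii) your parametrization of the stable subspace by simple exponentials $v_j e^{\mu_j t}$ tacitly assumes simple roots, which is not established -- the energy argument circumvents this, but then the three-dimensionality of the stable subspace should be argued from the root count with multiplicities, as the paper does, rather than from eigenvector bases.
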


\begin{proof}
Let $\alpha \in (0,\frac{\pi}{2}]$. Fix $x_0\in \partial \Omega$. To lighten the notation, we set
\[
 \bar a_{ij}:=	a_{ij}^\e (x_0) \qquad \mbox{for} \quad 1\leq i,j\leq 2.
\]
We have to verify that for  $\lambda\in \Sigma_\alpha \setminus\{0\}$ the system of differential equations
\[
 [\lambda -	a_\pi(\bar u_\e(x_0),i\partial_t)]u(t)=0,\quad t>0, \qquad b_\pi(i\partial_t)u(0)=g,
\]
has one and only one solution $u$ satisfying $|u(t)|\to 0$ as $t\to \infty$ for any $g\in \R^3$. Set $u=(u_1,u_2)$. Then the system of differential equations reads
\begin{align}\label{eq:sy1}
\begin{split}
\lambda u_1 + \bar{a}_{11}u_1^{(4)}-\bar{a}_{12}u_2^{(2)}&=0,\\
\lambda u_2 + \bar{a}_{21}u_1^{(4)}-\bar{a}_{22}u_2^{(2)}&=0.
\end{split}
\end{align}
Observe that
\begin{align*}
d:=\bar a_{11}\bar a_{22}-\bar a_{12}\bar a_{21} = -\sigma^\prime(\Gamma_0^\e(x_0))\frac{(h_0^\e)^4(x_0)}{12}\Gamma_0^\e(x_0)+\mathcal D\frac{(h_0^\e)^3(x_0)}{3}>0;
\end{align*}
whence the matrix
\[
	\begin{pmatrix} \bar{a}_{11} & -\bar{a}_{12}\\
					\bar{a}_{21} & -\bar{a}_{22}
	\end{pmatrix}
\]
is invertible. As a consequence, the system \eqref{eq:sy1} can be rewritten as
\begin{align}\label{eq:sy2}
\begin{split}
u_1^{(4)} &= \frac{\lambda}{d}\left[-\bar{a}_{22}u_1 + \bar{a}_{12}u_2\right],\\
u_2^{(2)}&= \frac{\lambda}{d}\left[-\bar{a}_{21}u_1 + \bar{a}_{11}u_2\right].
\end{split}
\end{align}
The first equation implies that
\begin{equation}\label{eq:u2}
	u_2=\frac{1}{\bar{a}_{12}}\left[ \frac{d}{\lambda}u_1^{(4)}+\bar{a}_{22}u_1\right].
\end{equation}
Inserting the above equation and its second derivative into the second equation of \eqref{eq:sy2} yields the 6th-order ordinary differential equation
\begin{equation}\label{eq:6th}
u_1^{(6)}-\frac{\lambda}{d}\bar{a}_{11}u_1^{(4)}+\frac{\lambda}{d}\bar{a}_{22}u_1^{(2)}-\frac{\lambda^2}{d}=0.
\end{equation}
The general solution of \eqref{eq:6th} is given by
\begin{equation}\label{eq:gen}
u_1(t)= \sum_{k=1}^6 c_k e^{\Lambda_kt},
\end{equation}
where $\{\Lambda_k\in \C \mid k=1,\ldots,6\}$ are the roots of the characteristic polynomial
\begin{equation}\label{eq:char}
\Lambda^6-\frac{\lambda}{d}\bar{a}_{11}\Lambda^4+\frac{\lambda}{d}\bar{a}_{22}\Lambda^2-\frac{\lambda^2}{d}=0.
\end{equation}
We claim that the above polynomial has exactly three roots with strictly negative real part (say $\Lambda_k$ for $k=1,2,3$) and three roots with strictly positive real part (say $\Lambda_k$ for $k=4,5,6$). Due to the requirement that the solution tends to zero at infinity, we obtain that $c_1, c_2,$ and $c_3$ are the only nonzero constants in \eqref{eq:gen}. Let the initial condition $B_0u(0)=g$ be given by
\[
u_1^\prime(0)=g_1,\quad u_2^\prime(0)=g_2,\quad u_1^{(3)}(0)=g_3.
\]
From \eqref{eq:u2}, we infer that $u_1^{(5)}(0)=\frac{d}{\lambda}\left(\bar{a}_{12}g_2-\bar{a}_{22}g_1\right)$. Thus, any solution $u_1$ tending to zero at infinity is uniquely determined by the three initial conditions $u_1^\prime (0)$, $u_1^{(3)}(0)$, and $u_1^{(5)}(0)$.

\medskip

We are left to prove that \eqref{eq:char} has exactly three roots with strictly positive and three roots with strictly negative real part. Set $z:= \Lambda^2$, then $z$ solves the third-order equation
\begin{equation}\label{eq:z}
p(z):=dz^3 -\lambda\bar{a}_{11}z^2+\lambda\bar{a}_{22}z -\lambda^2=0.
\end{equation}
Observe that due to $\lambda\neq 0$, a zero root can be excluded. Moreover, all roots of \eqref{eq:char} appear in pairs with complex angle of $\pi$. Hence, the claim that \eqref{eq:char} has exactly three roots with strictly positive and three roots with strictly negative real part, can be proved by verifying that $p$ has no  negative real roots.
In what follows, we show that for any $\lambda \in \Sigma_\alpha \setminus\{0\}$ the above polynomial $p$ has a  no  negative root $z\in \R$.  We distinguish the cases where $\lambda$ is real and complex. If $\lambda \in \R$ then, by Descartes's Rule, the number of  negative roots of \eqref{eq:z} is bounded from above by the number of sign changes of $p(-z)$, which is zero. Hence, there do not exist any real  negative roots for $\lambda \in \R$. Concerning the case where $\lambda \in \Sigma_\alpha\setminus\{0\}$ is complex, we can write $\lambda =a+ib$, where $a,b\in \R$ with $a\geq 0$ and $b\neq 0$. Then \eqref{eq:z} decomposes to
\begin{align*}
\begin{split}
dz^3 - a\bar{a}_{11}z^2+ a\bar{a}_{22}z - (a^2-b^2)&=0,\\
-\bar{a}_{11}z^2+\bar{a}_{22}z-2a&=0.
\end{split}
\end{align*}
The second equation has the solution
\[
	z_{\pm}=\frac{\bar a_{22}}{2 \bar a_{11}}\pm \sqrt{\left(\frac{\bar a_{22}}{2 \bar a_{11}}\right)^2-\frac{2a}{\bar a_{11}}}.
\]
Notice that in view of $a\geq 0$ and $\lambda \neq 0$ a real solution $z$ satisfies $z> 0$. 
We conclude that there exist no negative real roots of $p$ for any $\lambda \in \Sigma_\alpha \setminus\{0\}$. Thus the six roots of \eqref{eq:char} are given by
\begin{align*}
\Lambda_1= |\Lambda_1|e^{i\frac{\theta_1}{2}},\qquad &\Lambda_2= |\Lambda_2|e^{i\frac{\theta_1}{2}+i\pi},\\
\Lambda_3= |\Lambda_3|e^{i\frac{\theta_2}{2}},\qquad &\Lambda_4= |\Lambda_4|e^{i\frac{\theta_2}{2}+i\pi},\\
\Lambda_5= |\Lambda_5|e^{i\frac{\theta_3}{2}},\qquad &\Lambda_6= |\Lambda_6|e^{i\frac{\theta_3}{2}+i\pi},
\end{align*}
with $\theta_i=\mbox{arg}(z_i)$, $i=1,2,3$, where $(z_i)_{1\leq i \leq 3}$ are the roots of \eqref{eq:z}. Since $p$ has no negative real roots, we have that $\theta_i \neq \pi$ for $i=1,2,3$. Hence there exist exactly three roots of \eqref{eq:char} with strictly positive and three roots with strictly negative real part, which proves the assertion.

\end{proof}

\begin{satz}\label{thm:R} For every $\e\in (0,\e_0)$, there exists $\gamma>0$ and $M\geq 1$ such that 
\begin{equation*}\
[\mbox{\emph{Re}}\lambda \geq \gamma] \subset \rho(A_\e) \quad \mbox{and}\quad 
\|(\lambda - A_\e)^{-1}\|_{\mathcal{L}(L_2\times L_2)}\leq \frac{M}{1+|\lambda|}, \quad \mbox{for all} \quad \lambda \in  [\mbox{\emph{Re}}\lambda \geq \gamma];
\end{equation*}
that is $A_\e$ is the generator of an analytic semigroup on $L_2\times L_2$.
\end{satz}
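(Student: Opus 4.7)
The plan is to view Proposition~\ref{thm:R} as a direct consequence of Theorem~\ref{T:res} applied to the boundary value problem \eqref{eq:BVP} with right-hand side $f\in L_2\times L_2$ and vanishing boundary data $g=0$. Lemmas~\ref{lem:C1} and~\ref{lem:C2} have already verified the parameter-ellipticity condition (C1) on $\C\setminus(-\infty,0)$ and the Lopatinskij--Shapiro condition (C2) on $\overline{\Sigma_{\pi/2}}\setminus\{0\}$, respectively. Since both conditions are open in $\lambda$ --- nonvanishing of a polynomial for (C1), and a root count for a polynomial whose coefficients depend continuously on $\lambda$ and on $x_0\in\partial\Omega$ (which here consists of only two points) for (C2) --- a short continuity argument upgrades these to Douglis--Nirenberg parameter ellipticity of $(A_\e,B)$ on a strictly larger sector $\Sigma_{\pi/2+\delta}$ for some $\delta>0$.

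With this in hand, Theorem~\ref{T:res} applied with the Douglis--Nirenberg indices $s=0$, $l_1=l_2=0$, $m_1=4$, $m_2=2$, $r_1=-3$, $r_2=r_3=-1$ and data $g=0$ yields, for every $f\in L_2\times L_2$ and every $\lambda\in\Sigma_{\pi/2+\delta}$ with $|\lambda|\geq\lambda_0$, a unique $u=(u_1,u_2)\in H^4\times H^2$ solving $(\lambda-A_\e)u=f$ and $Bu=0$. The condition $Bu=0$ reads $\partial_x u_1=\partial_x u_2=0$ and $\partial_x^3 u_1=0$ on $\partial\Omega$. Inspection of the definition of $H^s_B$ (for $s=4$ the inequality $2l+1<7/2$ selects $l\in\{0,1\}$; for $s=2$ it selects $l=0$) shows that this is precisely the characterisation of $u\in H^4_B\times H^2_B=\dom(A_\e)$, so that $\lambda\in\rho(A_\e)$ and $(\lambda-A_\e)^{-1}f=u$.

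Specialising the resolvent estimate of Theorem~\ref{T:res} to $s=0,g=0$ (so that $\|f_i\|_{H^{-l_i}}=\|f_i\|_{L_2}$) contains in particular a bound of the form
\[
|\lambda|\,\|u\|_{L_2\times L_2}\leq c\,\|f\|_{L_2\times L_2},
\]
uniform in $\lambda\in\Sigma_{\pi/2+\delta}$ with $|\lambda|\geq\lambda_0$. Choosing $\gamma>0$ so that $\{\re\lambda\geq\gamma\}\subset\Sigma_{\pi/2+\delta}\cap\{|\lambda|\geq\lambda_0\}$ delivers the inclusion and estimate asserted in the proposition. Since this estimate in fact holds on a sector of opening strictly larger than $\pi$, the classical characterisation (see e.g.\ \cite[Theorem~2.5.2]{Pazy}) implies that $A_\e$ generates an analytic semigroup on $L_2\times L_2$. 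The main technical point is the sector enlargement from $\Sigma_{\pi/2}$ to $\Sigma_{\pi/2+\delta}$: this step is essential for analyticity (rather than mere $C_0$-semigroup generation), while the remainder amounts to the straightforward identification $\ker B=\dom(A_\e)$ and an application of the Douglis--Nirenberg machinery in Theorem~\ref{T:res}.
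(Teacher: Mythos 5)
Your argument follows the same core route as the paper's own proof: verify Douglis--Nirenberg parameter ellipticity of the boundary value problem \eqref{eq:BVP} via Lemmas~\ref{lem:C1} and~\ref{lem:C2}, apply Theorem~\ref{T:res} with $s=0$, $l_1=l_2=0$, $m_1=4$, $m_2=2$, $r_1=-3$, $r_2=r_3=-1$ and $g=0$, identify $\ker B$ with $H^4_B\times H^2_B$, and read off the resolvent bound. Your additional detour --- enlarging the sector to $\Sigma_{\pi/2+\delta}$ before invoking Pazy's criterion for analytic semigroups --- is not needed: the resolvent estimate on the shifted half-plane $[\re\lambda\geq\gamma]$ that Theorem~\ref{T:res} already yields for $\alpha=\pi/2$ is itself an equivalent characterisation of $A_\e\in\mathcal{H}(H^4_B\times H^2_B,L_2\times L_2)$; this is precisely the formulation stated in the proposition and the one built into Amann's framework, and it is what the paper's two-line proof invokes implicitly. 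Moreover, as you phrase it the enlargement step contains a gap worth noting: openness of (C1)--(C2) is only local in $\lambda$, and since the auxiliary polynomial \eqref{eq:z} is not (quasi-)homogeneous in $(\lambda,z)$, a ``short continuity argument'' does not by itself supply a $\delta$ that stays uniform as $|\lambda|\to\infty$; one would need the standard compactification with respect to the appropriate parameter weight. Since the conclusion already follows from the half-plane criterion, this is an extra step that creates work rather than saving it.
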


\begin{proof}
Lemma \ref{lem:C1} and Lemma \ref{lem:C2} imply that for $\alpha \in (0,\frac{\pi}{2}]$, the boundary value problem \eqref{BVP} is parameter elliptic in $\Sigma_\alpha$ in the sense of Douglis--Nirenberg. Recalling that $m_1=4,m_2=2$, $l_1=l_2=0$ and $r_i\leq -1$ for all $i=1,2,3$, the statement follows from Theorem \ref{T:res} for $s=0$.
\end{proof}

By a perturbation argument for analytic semigroups (cf. e.g. \cite[Theorem 3.2.1]{Pazy}), the existence of a maximal solution of \eqref{eq:equation} is a consequence of Proposition \ref{thm:R} and the fact that there exists a constant $c>0$ independent of $\e$ such that
\[
	\|A_\e-A(\bar u)\|_{\mathcal{L}(H^4_B\times H^2_B,L_2\times L_2)}< c\e \qquad \mbox {for all} \quad \e \in (0,\e_0).
	\]
	
\subsubsection{Uniqueness} In order to prove the uniqueness of our maximal strong solution with respect to the initial datum $u_0\in \mathcal O_\alpha$ we show that any strong solution
 \[
 \tilde u\in C^1((0,\tilde T),L_2\times L_2)\cap C((0,\tilde T),H^4_B\times H^2_B)\cap C([0,\tilde T), \mathcal O_\alpha)\qquad \tilde T\in (0,\infty]
 \]
 satisfies
 \[
 \tilde u \in  C^\eta([0,T],  H^{4\beta}_B\times H^{2\beta}_B) \quad \mbox{for all}\quad T\in (0,\tilde T),
 \]
 for some $\eta \in (0,1)$. 
Let $\tilde T\in (0,\infty]$ and $\tilde u\in C^1((0,\tilde T],E_0)\cap C((0,\tilde T],E_1)\cap C([0,\tilde T), \mathcal O_\alpha)$ be a strong solution. In particular, we have that $\tilde u\in C([0,\tilde T),H^3\times H^1)$, which allows us to deduce that $\partial_t \tilde u \in BC((0,\tilde T),(H^1)^\prime \times (H^1)^\prime)$, whence $ \tilde u \in BC^1((0,\tilde T),(H^1)^\prime \times (H^1)^\prime)$.
 If $T\in (0,\tilde T)$, the mean value theorem guarantees that there exists a constant $c=c(T)>0$ such that
 \begin{equation}\label{eq:MV}
 \|\tilde u(t_1)-\tilde u(t_0)\|_{(H^1)^\prime \times (H^1)^\prime}\leq c|t_1-t_0|\qquad \mbox{for any}\quad t_0,t_1\in [0,T].
 \end{equation}
 Using the interpolation inequality, we conclude that there exists a constant $C=C(T)>0$ such that
 \begin{align*}
	 \|\tilde u(t_1)- \tilde u(t_2)\|_{H^{4\beta}\times H^{2\beta}}&\leq \|\tilde u(t_1)- \tilde u(t_2)\|_{(H^1)^\prime \times (H^1)^\prime}^{4\frac{\alpha-\beta}{1+4\alpha}}\|\tilde u(t_1)- \tilde u(t_2)\|_{\mathcal O_\alpha}^{\frac{1+4\beta}{1+4\alpha}}\leq C |t_1-t_0|^{4\frac{\alpha-\beta}{1+4\alpha}}
 \end{align*}
for any $t_0,t_1\in [0,T]$, where we used the regularity of $\tilde u$ and \eqref{eq:MV}. Setting $\eta:=4\frac{\alpha-\beta}{1+4\alpha}\in (0,1)$, the uniqueness claim follows from Theorem \ref{T2} ii).

\bigskip

\section{Asymptotic stability}
\label{S:A}

In this section, we study the asymptotic stability of equilibria for system \eqref{eq:system}.  If $f\in L^1(\Omega)$, let us denote by
\[
\langle f \rangle:=\frac{1}{|\Omega|}\int_{\Omega} f(x)\,dx
\]
the total mass of $f$ over $\Omega$. It follows immediately from the structure of the equations in \eqref{eq:system} and the no-flux boundary conditions \eqref{eq:boundary} that any strong solution is mass conserving.

\begin{lem}[Conservation of mass]
	\label{lem:con}
	Let $u=(h,\Gamma)$ be a solution as found in Theorem \ref{T1} corresponding to the initial datum $u_0=(h_0,\Gamma_0)$. Then,
	\[
	\langle h(t)\rangle = \langle h_0\rangle,\qquad \langle \Gamma(t)\rangle =\langle \Gamma_0\rangle
	\]
	for all $t\in [0,T(u_0))$.
\end{lem}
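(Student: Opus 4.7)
The plan is to exploit the divergence structure of both equations in \eqref{eq:system} together with the no-flux boundary conditions \eqref{eq:boundary}. The idea is, for each $t\in(0,T(u_0))$, to integrate the PDEs over $\Omega=(0,L)$, swap time differentiation with spatial integration (justified by the regularity obtained in Theorem \ref{T1}), and observe that the resulting integrals collapse to boundary terms that vanish.

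First I would fix $t \in (0,T(u_0))$ and recall that, by Theorem \ref{T1}, $u(t)=(h(t),\Gamma(t)) \in H^{4}_B\times H^{2}_B$, so in particular $\partial_x h(t) = \partial_x^3 h(t) = 0$ and $\partial_x \Gamma(t) = 0$ at $x=0,L$. Since $u \in C^1((0,T(u_0)),L_2\times L_2)$, the map $t \mapsto \int_\Omega h(t,x)\,dx$ is $C^1$ on $(0,T(u_0))$, and we may differentiate under the integral sign. Using the first equation of \eqref{eq:system}, the fundamental theorem of calculus yields
\[
\frac{d}{dt}\int_\Omega h(t,x)\,dx = -\Bigl[\tfrac{1}{3}h^3\partial_x^3 h + \tfrac{1}{2}h^2\sigma^\prime(\Gamma)\partial_x\Gamma\Bigr]_{x=0}^{x=L} = 0,
\]
where both boundary terms vanish thanks to $\partial_x^3 h = 0$ and $\partial_x \Gamma = 0$ at the endpoints. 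An analogous computation for the second equation gives
\[
\frac{d}{dt}\int_\Omega \Gamma(t,x)\,dx = -\Bigl[\tfrac{1}{2}h^2\Gamma\partial_x^3 h + h\Gamma\sigma^\prime(\Gamma)\partial_x\Gamma - \mathcal{D}\partial_x\Gamma\Bigr]_{x=0}^{x=L} = 0.
\]

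Hence both $t \mapsto \langle h(t)\rangle$ and $t\mapsto \langle \Gamma(t)\rangle$ are constant on $(0,T(u_0))$. To extend the identities down to $t=0$, I would use the continuity statement $u \in C([0,T(u_0)),\mathcal{O}_\alpha)$ with $\mathcal{O}_\alpha \hookrightarrow H^{4\alpha}\times H^{2\alpha} \hookrightarrow L^1\times L^1$, so that $\langle h(t)\rangle \to \langle h_0\rangle$ and $\langle \Gamma(t)\rangle \to \langle \Gamma_0\rangle$ as $t\searrow 0$.

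There is no real obstacle here; the only thing one has to double-check is that the boundary-trace conditions needed to make the boundary terms vanish are genuinely available for all $t>0$, which is ensured because $u(t)$ lies in the space $H^4_B\times H^2_B$ that was designed precisely to encode \eqref{eq:boundary}. The statement at $t=0$ then drops out by the continuity built into the solution class.
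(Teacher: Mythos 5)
The paper itself gives no proof of this lemma, only the remark just before it that mass conservation ``follows immediately from the structure of the equations in \eqref{eq:system} and the no-flux boundary conditions \eqref{eq:boundary}''; your argument is precisely the standard computation that remark alludes to, and it is correct. You integrate the divergence-form equations over $\Omega$, note that the flux $J_1=\tfrac{1}{3}h^3\partial_x^3h+\tfrac{1}{2}h^2\sigma'(\Gamma)\partial_x\Gamma$ (resp.\ $J_2-\mathcal D\partial_x\Gamma$) lies in $H^1(\Omega)$ for each fixed $t$ thanks to $u(t)\in H^4_B\times H^2_B$, so the fundamental theorem of calculus applies and the boundary terms vanish by $\partial_x^3h=\partial_x\Gamma=0$ on $\partial\Omega$; differentiation under the integral and the limit $t\searrow0$ are justified exactly as you say by $u\in C^1((0,T(u_0)),L_2\times L_2)\cap C([0,T(u_0)),\mathcal O_\alpha)$.
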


While it is clear form \eqref{eq:system}, that any pair of constants $(h_*,\Gamma_*)$ 
is an equilibrium solution of \eqref{eq:system}--\eqref{eq:boundary}, the following discussion implies conversely if $u_*=(h_*,\Gamma_*)$ is an equilibrium of \eqref{eq:system}--\eqref{eq:boundary}, then $h_*,\Gamma_*$ are  constant.
In order to determine the set of equilibrium solutions of \eqref{eq:system}--\eqref{eq:boundary}, observe that the  energy functional
\[
E(h,\Gamma)(t):=\frac{1}{2}\int_{\Omega}|\partial_x h|^2(t,x)+\Phi(\Gamma)(t,x)\, dx
\]
(formally) decreases along solutions,
where $\Phi\in C^2(\R)$ is such that 
\[
\Phi(s)>0\qquad  \Phi^{\prime\prime}(s)=-\frac{\sigma^\prime(s)}{s}\qquad\mbox{for}\quad s>0.
\] 
 In particular (cf. \cite{EHLW4th, GW06}),
\begin{align}\label{eq:TEF}
\begin{split}
	\frac{d}{dt}E(h,\Gamma)&=-\frac{3}{2}\int_{\Omega} \left(\frac{h^{\frac{3}{2}}}{3}\partial_x^3 h+\frac{h^\frac{1}{2}}{2}\partial_x \sigma(\Gamma)\right)^2\,dx - \frac{1}{2}\int_{\Omega} \left(\frac{h^\frac{3}{2}}{2}\partial_x^3 h +h^\frac{1}{2}\partial_x \sigma(\Gamma)\right)^2\,dx\\
	&-\frac{1}{24}\int_{\Omega} h^3(\partial_x^3 h)^2\,dx-\frac{1}{8}\int_{\Omega} h\left(\partial_x \sigma(\Gamma)\right)^2\,dx-\mathcal D \int_{\Omega} \Phi^{\prime\prime}(\Gamma)\left(\partial_x\Gamma\right)^2\,dx.
\end{split}
\end{align}

Note that the regularity of our solution provided by Theorem \ref{T1} is a priori not sufficient to justify the derivative above. However, in view of the parabolic character of equation \eqref{eq:equation}, the regularity of a strong solution can be improved as follows:
 
 \begin{lem}\label{lem:imp}
 	Let
 	 \[
 	  u\in C^1((0, T),L_2\times L_2)\cap C((0, T),H^4_B\times H^2_B)\cap C([0, T), \mathcal O_\alpha),\qquad  T\in (0,\infty],
 	 \]
 	 be a solution of \eqref{eq:equation} corresponding to the initial datum $u_0=(h_0,\Gamma_0)\in \mathcal O_\alpha$, then
 	\[
 	u\in  C^\frac{5}{4}((\e,T),L_2\times L_2)\cap C^\frac{1}{4}((\e,T), H^4_B\times H^2_B)\quad \mbox{for any}\quad \e \in (0,T).
 	\]
 \end{lem}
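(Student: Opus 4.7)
The plan is to argue by a combination of interpolation inequalities and the linear parabolic regularity implied by the analytic semigroup property already established in Proposition \ref{thm:R}. Fix $\e\in(0,T)$ and pick $0<\e_0<\e<T'<T$. On the compact subinterval $[\e_0,T']$, the assumption $u\in C^1((0,T),L_2\times L_2)$ forces $\dot u$ to be bounded in $E_0:=L_2\times L_2$, so $u$ is Lipschitz continuous with values in $E_0$; moreover $u$ is bounded in $E_1:=H_B^4\times H_B^2$. The standard interpolation inequality
\[
\|u(t_1)-u(t_0)\|_{E_\theta}\le C\|u(t_1)-u(t_0)\|_{E_0}^{1-\theta}\|u(t_1)-u(t_0)\|_{E_1}^\theta
\]
then yields $u\in C^{1-\theta}([\e_0,T'],E_\theta)$ for every $\theta\in(0,1)$; in particular the choice $\theta=3/4$ produces H\"older continuity of exponent $1/4$ with values in $H^3_B\times H^{3/2}_B$, and any $\theta$ slightly larger than $7/8$ produces a positive H\"older exponent with values in $E_\beta$.

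Next I would exploit the equation itself. Since $A:\mathcal O_\beta\to\mathcal L(E_1,E_0)$ and $F:\mathcal O_\beta\to E_0$ are $C^\infty$, the H\"older regularity of $u$ in $E_\beta$ from the first step transfers to H\"older continuity of $t\mapsto A(u(t))$ in $\mathcal L(E_1,E_0)$ and of $t\mapsto F(u(t))$ in $E_0$. Freezing coefficients at a reference time $t_*\in[\e_0,T']$, I rewrite \eqref{eq:equation} as the non-autonomous linear problem
\[
\dot u(t)-A(u(t_*))u(t)=g(t),\qquad g(t):=F(u(t))+\bigl[A(u(t))-A(u(t_*))\bigr]u(t).
\]
Proposition \ref{thm:R} guarantees that $A(u(t_*))$ generates an analytic semigroup on $E_0$, and $g$ is H\"older continuous in $E_0$ on $[\e_0,T']$. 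Classical Schauder-type regularity for linear parabolic problems with analytic generator (e.g.\ the theory in \cite{Amann95} or Lunardi's maximal regularity in H\"older spaces) then promotes this into H\"older regularity of $\dot u$ and of $A(u(t_*))u$ in $E_0$; combined with the resolvent bound in Proposition \ref{thm:R} the latter gives H\"older regularity of $u$ itself in $E_1$. Iterating this bootstrap once if necessary, and optimizing the interpolation exponent, one reads off the claimed regularity $u\in C^{5/4}((\e,T),E_0)\cap C^{1/4}((\e,T),E_1)$.

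The principal obstacle is the last step above: pure interpolation between $C^1$ in $E_0$ and $C^0$ in $E_1$ cannot deliver H\"older regularity with values in the top space $E_1$, only in strict intermediate spaces $E_\theta$ with $\theta<1$. Pushing the time-H\"older regularity all the way up to $E_1$ requires the genuine parabolic smoothing encoded in the analytic semigroup bound $\|A(u(t_*))e^{sA(u(t_*))}\|_{\mathcal L(E_0)}\lesssim 1/s$, which is what enters the Schauder estimate via the variation-of-constants formula. A minor technical subtlety is ensuring uniformity of the involved constants as $t_*$ varies in $[\e_0,T']$; this is handled by the continuous dependence of $A(\bar u)$ on $\bar u\in\mathcal O_\beta$ together with a finite covering argument on the compact subinterval.
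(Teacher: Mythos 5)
Your outline tracks the paper's closely through its first half: both of you begin by interpolating between $u\in C^1(E_0)$ and $u\in C^0(E_1)$ to get $u\in C^{1-\theta}(E_\theta)$, choose $\theta=\tfrac34$ to obtain exponent $\tfrac14$ with values in $H^3_B\times H^{3/2}_B$, and then observe that the maps $t\mapsto A(u(t))\in\mathcal H(E_1,E_0)$ and $t\mapsto F(u(t))\in E_0$ inherit this $C^{1/4}$ time-regularity (the paper checks this last transfer carefully via Sobolev embeddings, since $E_{3/4}$ lies \emph{below} the space $E_\beta$, $\beta>\tfrac78$, on which $A$ and $F$ were originally defined; you pass over this point, but it is a routine omission). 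The two proofs diverge at the parabolic step. The paper simply reads $w:=u$ as a solution of the \emph{non-autonomous} linear problem $\partial_t w-A(u(t))w=F(u(t))$ with $C^{1/4}$-in-time generator and $C^{1/4}$-in-time source, and invokes \cite[Theorem II.1.2.1]{Amann95} directly to conclude $u\in C^{5/4}(E_0)\cap C^{1/4}(E_1)$.

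Your version instead freezes the coefficient at a reference time $t_*$ and pushes the difference into the source, $g(t)=F(u(t))+[A(u(t))-A(u(t_*))]u(t)$, with the intent of applying autonomous Schauder theory. Here there is a genuine gap: you assert that $g$ is H\"older continuous in $E_0$, but this is not justified from the hypotheses. Decomposing
\[
g(t)-g(s)=\bigl[F(u(t))-F(u(s))\bigr]+\bigl[A(u(t))-A(u(t_*))\bigr]\bigl(u(t)-u(s)\bigr)+\bigl[A(u(t))-A(u(s))\bigr]u(s),
\]
the first and third terms are controlled by the $C^{1/4}$ regularity of $u$ in intermediate spaces together with the boundedness of $u$ in $E_1$. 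The middle term, however, involves $\|u(t)-u(s)\|_{E_1}$, which is only known to be \emph{continuous}, not H\"older: as you yourself note, interpolation can never deliver H\"older regularity with values in the top space $E_1$. So the commutator term cannot be estimated by $|t-s|^\eta$ a priori, and the claimed H\"older continuity of $g$ is circular --- it presupposes exactly the $E_1$-regularity of $u$ you are trying to establish. The standard repair is not a covering argument for uniformity (as you suggest) but a contraction/absorption argument in a H\"older space, exploiting that $\|A(u(t))-A(u(t_*))\|_{\mathcal L(E_1,E_0)}$ can be made small on short subintervals; this is in essence the internal mechanism of the non-autonomous theorem the paper cites, so it is more economical to invoke that theorem directly rather than to reconstruct its proof.
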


 \begin{proof}
 	Let  \[
 	 	  u\in C^1((0, T),L_2\times L_2)\cap C((0, T),H^4_B\times H^2_B)\cap C([0, T), \mathcal O_\alpha),\qquad  T\in (0,\infty],
 	 	 \]
 	be a solution of \eqref{eq:equation} and $\e \in (0,T)$. 
 	By interpolation inequality, we deduce that 
 	\[
 	u\in C^{1-\theta}([\e,T),H^{4\theta}_B\times H^{2\theta}_B)\qquad\mbox{for any }\quad \theta \in (0,1)\setminus\{3/8,7/8\}.
 	\]
 	Choose $\theta=\frac{3}{4}$. Then clearly the coefficients of $A(u)$ are continuous in view of Sobolev embedding and
 	\begin{equation}\label{eq:A}
 	A(u)\in C^{\frac{1}{4}}([\e,T), \mathcal{H}(H^4_B\times H^{2}_B,L_2\times L_2)).
 	\end{equation}
 If $\theta=\frac{3}{4}$, then $u(t)=(h,\Gamma)(t)\in H^{3}_B\times H^{\frac{3}{2}}_B$ for all $t\in [\e,T)$. In particular, $h(t)\in C^1(\overline\Omega,\R)$, $\Gamma(t)\in C(\overline \Omega,\R)$ and $\partial_x\Gamma(t)\in H^{\frac{1}{2}}_B\subset L_4(\Omega,\R)$ by Sobolev embedding, cf. \cite[Theorem 4.57]{Demengel}. Together with our assumption that $\sigma\in C^2(\R)$, we conclude that
 	\begin{equation}\label{eq:F}
 F(u)\in C^{\frac{1}{4}}( [\e,T),L_2\times L_2).
 	\end{equation}
 Note that $w:=u$ solves the linear parabolic problem
 	\[
 		\partial_t w -A(u)w=F(u)\qquad w(\e)=u(\e).
 	\]
 	Taking into account \eqref{eq:A} and \eqref{eq:F}, we benefit from the regularizing effects of parabolic equations and obtain that the unique solution $w=u$  enjoys the following regularity, cf. \cite[Theorem II.1.2.1]{Amann95}:
 	\[
 	u\in 	u\in  C^\frac{5}{4}((\e,T),L_2\times L_2)\cap C^\frac{1}{4}((\e,T), H^4_B\times H^2_B).
 	\]
 	
 \end{proof}

The above Lemma guarantees that any strong solution $u=(h,\Gamma)$ as found in Theorem \ref{T1}, corresponding to the initial datum $u_0=(h_0,\Gamma_0)\in \mathcal O_\alpha$, satisfies 
	\[
		u\in  C^\frac{5}{4}((\e,T),L_2\times L_2)\cap C^\frac{1}{4}((\e,T), H^4_B\times H^2_B),
	\]
	where $\e \in (0,T(u_0))$ is arbitrary.
	By \cite[Theorem 1.1.5]{Lun}, the above implies that
	\[
	u\in C^{\frac{5}{4}-\alpha}((\e,T(u_0)),H^{4\alpha}_B\times H^{2\alpha}_B)\qquad\alpha\in (0,1).
	\]
	For $\alpha=\frac{1}{4}$ it follows that

	\[
	u\in C^1((\e,T(u_0)), H^1_B\times H^{\frac{1}{2}}_B)
	\]
	for any $\e \in (0,T(u_0))$.
Thereby, the time differentiation in the energy functional \eqref{eq:TEF} is well-defined for any solution $u$ provided by Theorem \ref{T1} and $t\in (0,T(u_0))$. Since all terms on the right hand side of \eqref{eq:TEF} are nonpositive, we deduce that any equilibrium solution $u_*$ of \eqref{eq:system} necessarily takes the form $u_*=(h_*,\Gamma_*)$ with $h_*,\Gamma_*>0$ constant. 
 We apply a recently established theorem on linearized stability for quasilinear equations in interpolation space \cite[Theorem 1.3]{MW} and prove the following asymptotic stability result for \eqref{eq:system}:

\begin{thm}[Asymptotic stability]
	\label{TAS}
	Let $\beta\in (\frac{7}{8},1]$ and $u_*:=(h_*,\Gamma_*)$ with $h_*,\Gamma_*>0$ being constant. Then, there exists $\e=\e(h_*)>0$, $\omega=\omega(h_*)>0$ and $M=M(h_*)\geq 1$ such that for $0<\Gamma_*<\e$ and any initial datum $u_0=(h_0,\Gamma_0)\in H^{4\beta}\times H^{2\beta}$ with
	\[
	\langle h_0\rangle =h_*,\qquad \langle \Gamma_0\rangle =\Gamma_*,
	\]
	satisfying $\|u_0-u_*\|_{H^{4\beta}\times H^{2\beta}}<\e$, the unique solution $(h,\Gamma)$ to \eqref{eq:system} exists globally in time and  
	\[
	\|u(t,\cdot)-u_*\|_{H^{4\beta}\times H^{2\beta}}\leq Me^{-\omega t}\|u_0-u_*\|_{H^{4\beta}\times H^{2\beta}}.
	\]
\end{thm}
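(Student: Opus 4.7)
The plan is to realise the asymptotic stability as a direct consequence of \cite[Theorem 1.3]{MW} applied to the quasilinear equation \eqref{eq:equation} at the fixed point $u_*$. Since Lemma \ref{lem:con} asserts that both $\langle h\rangle$ and $\langle \Gamma\rangle$ are preserved by the flow, the natural reformulation is on the affine manifold $\mathcal M_{u_*}:=\{u\in\mathcal O_\alpha : \langle h\rangle =h_*, \langle\Gamma\rangle=\Gamma_*\}$ or, after subtracting $u_*$, on the zero-mean subspace
\[
E_0^0:=\{v\in L_2\times L_2:\langle v_1\rangle=\langle v_2\rangle=0\}.
\]
The setting of Theorem \ref{T1} carries over to the couple $(E_0^0, E_1\cap E_0^0)$: $A(u_*)$ maps $H^4_B\times H^2_B$ into $E_0^0$ (every term is in divergence form after the integrations by parts legitimised by \eqref{eq:boundary}), and mass conservation of the original system yields $\langle A(u)u+F(u)\rangle =0$, so the nonlinearity restricts smoothly to the quotient. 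With this reduction in place, the only hypothesis of \cite[Theorem 1.3]{MW} that is not already furnished by Section \ref{S:Q} is the strict negativity of the spectral bound $s(A(u_*)|_{E_0^0})$.

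To compute this spectrum I would exploit that $A(u_*)$ is a \emph{constant-coefficient} operator, hence simultaneously diagonalised by the cosine basis $\{\cos(k\pi x/L)\}_{k\geq 0}$, which is orthogonal in $L_2$ and compatible with every boundary condition encoded in $H^4_B$ and $H^2_B$ (including $\partial_x^3 h =0$, automatic for cosines). The mode $k=0$ spans the orthogonal complement of $E_0^0$, so on $E_0^0$ only $k\geq 1$ is retained. On the two-dimensional eigenspace of mode $k$, the operator $A(u_*)$ becomes the $2\times 2$ matrix
\[
M_k=\begin{pmatrix} -\tfrac{h_*^3}{3}\xi_k^4 & \tfrac{h_*^2}{2}\sigma'(\Gamma_*)\xi_k^2 \\[3pt] -\tfrac{h_*^2}{2}\Gamma_*\xi_k^4 & \bigl(h_*\Gamma_*\sigma'(\Gamma_*)-\mathcal D\bigr)\xi_k^2 \end{pmatrix},\qquad \xi_k:=k\pi/L.
\]
Since $\sigma'(\Gamma_*)\leq 0$ and $\mathcal D>0$, the trace $\mathrm{tr}(M_k)$ is strictly negative, while
\[
\det(M_k)=\xi_k^6\Bigl(-\tfrac{h_*^4}{12}\Gamma_*\sigma'(\Gamma_*)+\tfrac{h_*^3}{3}\mathcal D\Bigr)\geq \tfrac{h_*^3}{3}\mathcal D\,\xi_k^6>0
\]
is precisely the positivity computation already performed for parameter ellipticity in \eqref{eq:ob} and Lemma \ref{lem:C1}. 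Both eigenvalues of $M_k$ therefore lie in the open left half-plane for every $k\geq 1$; because the maximum real part tends to $-\infty$ as $k\to\infty$, the supremum is attained at some finite $k$ and yields a uniform bound $s(A(u_*)|_{E_0^0})\leq -\omega_0$ with $\omega_0=\omega_0(h_*,\Gamma_*,\mathcal D,L)>0$.

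The remainder is then a direct invocation of \cite[Theorem 1.3]{MW} in the interpolation scale associated with $(E_0^0, E_1\cap E_0^0)$, which delivers the constants $\varepsilon,\omega,M$ and the exponential decay in $H^{4\beta}\times H^{2\beta}$ for every initial datum $\varepsilon$-close to $u_*$ satisfying the mean constraints. The smallness requirement on $\Gamma_*$ in the statement reflects the fact that although $\omega_0$ is always positive, its quantitative dependence on $\Gamma_*$ can be absorbed uniformly into the $h_*$-dependent constants only when $\Gamma_*$ is restricted to a bounded set; this is what permits the clean formulation $\varepsilon=\varepsilon(h_*)$ and $\omega=\omega(h_*)$.

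The genuine technical obstacle I anticipate is not the spectral computation—which reduces to the two matrix inequalities above—but the rigorous transfer of the Section \ref{S:Q} framework to the constrained subspace. One must verify that the complex-interpolation identity $[E_0^0, E_1\cap E_0^0]_\theta=E_\theta\cap E_0^0$ persists for the admissible exponents, that the restriction of $A(u_*)$ remains an $\mathcal H$-generator with the resolvent estimates inherited from Proposition \ref{thm:R}, and that the quasilinear Lipschitz regularity of $(A,F)$ survives the restriction so that the abstract theorem of \cite{MW} is truly applicable. Once this bookkeeping is settled, the asymptotic stability claim follows immediately from the spectral bound established above.
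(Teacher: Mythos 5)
Your proposal is correct in its overall architecture (subtract the equilibrium, restrict to the zero-mean subspace, invoke \cite[Theorem 1.3]{MW}, reduce to a spectral bound for $A(u_*)$ on $P(L_2\times L_2)$), and this skeleton agrees with the paper. However, your \emph{proof of the spectral bound} is genuinely different and, in fact, sharper. The paper proceeds by an energy (Lyapunov) method: it differentiates the weighted quantity $q\|\partial_x z_1\|_{L_2}^2 + h_*\|z_2\|_{L_2}^2$ along the linear semiflow $z_t=A(u_*)z$, identifies a symmetric matrix $\hat{A}_q(h_*,\Gamma_*)$ governing the resulting quadratic form, verifies that $\hat{A}_q(h_*,0)$ is positive definite for $0<q<\tfrac{16\mathcal D}{3|\sigma'(0)|}$, and then perturbs in $\Gamma_*$; this is where the smallness condition $0<\Gamma_*<\e(h_*)$ originates. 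You instead diagonalize the constant-coefficient operator $A(u_*)$ over the cosine basis, observe that on each mode $k\geq 1$ the trace of $M_k$ is strictly negative and its determinant $\bigl(-\tfrac{h_*^4}{12}\Gamma_*\sigma'(\Gamma_*)+\tfrac{h_*^3}{3}\mathcal D\bigr)\xi_k^6$ is bounded below by $\tfrac{h_*^3}{3}\mathcal D\,\xi_k^6>0$, and conclude that every eigenvalue lies strictly in the open left half-plane. This is a cleaner computation and, notably, it does \emph{not} require $\Gamma_*$ to be small: the trace/determinant inequalities hold for every $\Gamma_*>0$. Your closing remark about why the smallness appears in the statement is therefore slightly off: the paper's smallness condition is a real limitation of the energy method (it must symmetrize the non-self-adjoint leading part and loses positivity for large $\Gamma_*$), not merely a matter of absorbing a $\Gamma_*$-dependence into constants. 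Your Fourier argument, if carried out with the bookkeeping you outline (verifying the interpolation identity on the zero-mean subspace, the restriction of the generator property, and the Fr\'echet differentiability of $(A,F)$ with $DF(0)=0$ and $(DA_*(0)[\cdot])\,0=0$ as the paper does), would establish the conclusion of Theorem \ref{TAS} without the hypothesis $\Gamma_*<\e$ at all, which is a genuine improvement over the paper's proof.
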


\begin{remark} The above theorem provides an asymptotic stability result of the flat equilibrium in the interpolation space $H^{4\beta}\times H^{2\beta}$ for $\beta\in (\frac{7}{8},1]$. It thereby improves the existing result in \cite{B1}, which only states asymptotic stability for initial data in $H^4\times H^2$. We would like to mention that in \cite{BG2} an asymptotic stability result for systems including \eqref{eq:system}  is proved for initial data, which even permit high oscillations -- the price to pay are more specific size restrictions and solutions in lower regularity spaces.
\end{remark}

Before proving Theorem \ref{TAS}, let us introduce the operator
\[
Pu:=(h-\langle h \rangle, \Gamma - \langle  \Gamma \rangle)\qquad u=(h,\Gamma)\in L_2\times L_2.
\]
Then, $P$ is the projection operator from $L_2\times L_2$ onto its subset of zero mean functions. The spaces $L_2\times L_2$ and $H^4_B\times H^2_B$ decompose into
\[
L_2\times L_2 = P(L_2\times L_2)\oplus (1-P)(L_2\times L_2)\quad\mbox{and}\quad H^4_B\times H^2_B= P(H^4_B\times H^2_B)\oplus (1-P)H^4_B\times H^2_B.
\]

\begin{proof}[Proof of Theorem \ref{TAS}]
	Notice that whenever $u$ as found in Theorem \ref{T1} is a solution of
	\begin{equation}\label{eq:EQr}
	\partial_t u -A(u)u=F(u),\qquad u(0)=u_0,
	\end{equation}
	then $z:= u-u_*$, where $u_*=\langle u_0\rangle$, is a solution of
	\begin{equation}\label{eq:N}
	\partial_t z-A_*(z)z=F(z),\qquad z(0)=u_0-u_*.
	\end{equation}
	Here $A_*(z):=A(z+u_*)$.
	This is due to the conservation of mass in Lemma \ref{lem:con}. Thereby, studying the stability of the equilibrium $u_*$ for \eqref{eq:EQr} is equivalent to the stability of the zero solution $z_*=0$ of \eqref{eq:N}. We are going to apply \cite[Theorem 1.3]{MW} to prove that $z_*=0$ is an asymptotically stable solution of \eqref{eq:N}. To this end we need to verify that
\begin{equation*}
F:P\mathcal O_\beta \to P(L_2\times L_2)\quad \mbox{and}\quad A(\cdot)z_*:P\mathcal O_\beta \to P(L_2\times L_2)\quad \mbox{are Fr\'echet differentiable at}\; z_*=0
\end{equation*}
and in addition
\[
\mathbb A:=A_*(z_*)+(DA_*(z_*)[\cdot])z_*+DF(z_*),\qquad z_*=0,
\]
has a negative spectral bound; that is
$
\sup\{\re \lambda \mid \lambda \in \sigma(\mathbb A)\}<0.
$
The functions $F$ and $A_*(\cdot)z_*$ are Fr\'echet differentiable and for $z_*=0$ we have
\begin{align*}
DF(z_*)=0\qquad\mbox{and}\qquad (DA_*(z_*)[v])z_*=0\quad\mbox{for any}\; v\in P\mathcal O_\beta.
\end{align*}
Thus, we are left to show that $\mathbb A = A(u_*)$ has a negative spectral bound as an operator on $P(L_2\times L_2)$ with domain $P(H^4_B\times H^2_B)$\footnote{Observe that $(1-P)A(u_*)v=0$ for any $v\in P(H^4_B\times H^2_B)$. Thus, $A(u_*)$ is a well-defined operator from $P(H^4_B\times H^2_B)$ to $P(L_2\times L_2)$.}. This can be done in a similar way as in \cite{B1, EHLW1}. Nevertheless, we include the proof for the sake of completeness.  Recalling that 
$
\mathbb A \in \mathcal{H}(H^4_B\times H^2_B,L_2\times L_2),
$
we infer from \cite[Corollary I.1.6.3]{Amann95} that 
\[
\mathbb A \in \mathcal{H}(P(H^4_B\times H^2_B),P(L_2\times L_2)),
\]
Thus, the linear evolution equation
\begin{equation}\label{eq:z}
z_t-\mathbb Az=0\quad t>0,\qquad z(0)=z_0,
\end{equation}
admits for any initial datum $z_0\in P(L_2\times L_2)$ a unique solution
\[
z\in C((0,\infty), P(H^4_B\times H^2_B))\cap C^1((0,\infty),P(L_2\times L_2))\cap C([0,\infty), P(L_2\times L_2)).
\]
In order to prove the negative spectral bound, we are going to show that for any initial datum $z_0\in P(L_2\times L_2)$ the corresponding solution $z$ of \eqref{eq:z} satisfies
\[
\|z(t)\|_{L_2\times L_2}\leq Me^{-\omega_0t} \|z_0\|_{L_2\times L_2}\qquad \mbox{for some}\quad M\geq 1,\;\omega_0>0.
\]
 To this end, let $z_0\in P(L_2\times L_2)$ and $z=(z_1,z_2)$ be the corresponding solution of the evolution equation \eqref{eq:z}.  Recall that $u_*=(h_*,\Gamma_*)$ with $h_*,\Gamma_*>0$ constant and $\mathbb A = A(u_*)$. Thus, $z$ solves
\begin{align*}
\left\{ 
\begin{array}{lcl}
\partial_t z_1 + \frac{1}{3}h_*^3\partial_x^4z_1 + \frac{1}{2}h_*^2 \sigma^\prime(\Gamma_*)\partial_x^2 z_2 =0,\\[5pt]
\partial_t z_2 + \frac{1}{2}h_*^2\Gamma_*\partial_x^4z_1 +\left( h_*\Gamma_* \sigma^\prime(\Gamma_*)-\mathcal D\right)\partial_x^2 z_2=0.
\end{array}
\right.
\end{align*}
 Denoting by $ \hat { A}_q(h_*,\Gamma_*)$ the symmetric matrix
 \[
 \hat { A}_q(h_*,\Gamma_*):= \begin{pmatrix}
\frac{q}{3}h_*^3 & \frac{1}{4}\left(qh_*^2\sigma^\prime(\Gamma_*)+h_*^3\Gamma_*\right)\\ \frac{1}{4}\left(qh_*^2\sigma^\prime(\Gamma_*)+h_*^3\Gamma_*\right) &  h_*^2\Gamma_* \sigma^\prime(\Gamma_*)-h_*\mathcal D\\
 \end{pmatrix},\qquad q>0,
 \]
 we obtain that
\begin{align*}
\frac{1}{2}\frac{d}{dt}\left(q\| \partial_x z_1\|_{L_2}^2+h_*\| z_2\|_{L_2}^2\right)=\left\langle \hat{  A}_q(h_*\Gamma_*) \begin{pmatrix} \partial_x^3 z_1 \\ \partial_x z_2 \end{pmatrix}, \begin{pmatrix} \partial_x^3 z_1 \\ \partial_x z_2 \end{pmatrix}\right\rangle_{L_2\times L_2}.
\end{align*}
The eigenvalues of $\hat A_q (h_*,0)$ are determined by the roots of
\[
\lambda^2-\lambda \left(\frac{q}{3}+h_*\mathcal D\right)+\frac{q}{3}h_*^4\mathcal D-\frac{1}{6}q^2h_*^4|\sigma^\prime(0)|^2=0.
\]
We conclude that $\hat A_q(h_*,0)$ is positive definite if
\[
0<q<\frac{16\mathcal D}{3|\sigma^\prime(0)|}.
\]
Thus, whenever $q$ satisfies the above condition, there exists $\e=\e(h_*)>0$ such that for any $0<\Gamma_*<\e$ the matrix $A_q(h_*,\Gamma_*)$ is positive definite. In particular, there exists a constant $c_0=c_0(h_*)>0$ such that
\[
\frac{1}{2}\frac{d}{dt}\left(q\|\partial_x z_1\|_{L_2}^2+h_*\| z_2\|_{L_2}^2\right)\leq -c _0\left(\|\partial_x^3 z_1\|_{L_2}^2+\|\partial_x z_2\|_{L_2}^2\right).
\]
Taking into account that $\partial_xz_1=\partial_x z_2=\partial_x^3z_1=0$ at $\partial \Omega$, and $\partial_x^2z_1$ has zero mean over $\Omega$, applying the Poincar\'e inequality implies the existence of a constant $c_1=c_1(h_*)>0$ such that
\[
\frac{d}{dt}\left(q\|z_1\|_{L_2}^2+h_*\|z_2\|_{L_2}^2\right)\leq -c_1 \left(q\|z_1\|_{L_2}^2+h_*\|z_2\|_{L_2}^2\right).
\]
Eventually, observing that
\[
||| z|||_{L_2\times L_2}:=q\|z_1\|_{L_2}^2+h_*\|z_2\|_{L_2}^2,\qquad z\in L_2\times L_2,
\]
constitutes an equivalent norm on $L_2\times L_2$, we infer from the above inequality that there exists a constant $\omega=\omega(h_*)>0$ and $M=M(h_*)\geq 1$ such that
\[
\|z(t)\|_{L_2\times L_2}\leq Me^{-\omega_0t} \|z_0\|_{L_2\times L_2}.
\]
Thereby, the proof is finished.

\end{proof}

\bigskip

\subsection*{Acknowlegments}
Funded by the Deutsche Forschungsgemeinschaft (DFG, German Research
Foundation) -- Project-ID 258734477 -- SFB 1173. In addition, the author would like to thank Bogdan Matioc for valuable suggestions to improve the manuscript. 

\bigskip

\bibliographystyle{siam}
\bibliography{/home/gabriele/Dokumente/Bibtex/B_Thin_Film_Wellposedness.bib}

\begin{thebibliography}{10}

\bibitem{AES}
{\sc M.~S. Agranovich, Y.~V. Egorov, and M.~A. Shubin}, {\em {Partial
  differential equations IX: Elliptic boundary value problems}}, Springer,
  1997.

\bibitem{AmannNon}
{\sc H.~Amann}, {\em {Nonhomogeneous linear and quasilinear elliptic and
  parabolic boundary value problems}}, in Function spaces, differential
  operators and nonlinear analysis (Friedrichroda, 1992), vol.~133 of
  Teubner-Texte Math., Teubner, Stuttgart, 1993, pp.~9--126.

\bibitem{Amann95}
\leavevmode\vrule height 2pt depth -1.6pt width 23pt, {\em {Linear and
  quasilinear parabolic problems, volume I: Abstract linear theory}},
  Birkh{\"{a}}user Verlag, Berlin, 1995.

\bibitem{B1}
{\sc G.~Bruell}, {\em {Modeling and analysis of a two-phase thin film model
  with insoluble surfactant}}, Nonlinear Anal. Real World Appl., 27 (2016),
  pp.~124--145.

\bibitem{B2}
\leavevmode\vrule height 2pt depth -1.6pt width 23pt, {\em {Weak solutions to a
  two-phase thin film model with insoluble surfactant driven by capillary
  effects}}, Journal of Evolution Equations, 17 (2017), pp.~1341--1379.

\bibitem{BG2}
{\sc G.~Bruell and R.~Granero-Belinchon}, {\em {On a thin film model with
  insoluble surfactant}}, preprint.

\bibitem{CT13}
{\sc M.~Chugunova and R.~M. Taranets}, {\em {Nonnegative weak solutions for a
  degenerate system modeling the spreading of surfactant on thin films}}, Appl.
  Math. Res. Express. AMRX,  (2013), pp.~102--126.

\bibitem{Demengel}
{\sc F.~Demengel and G.~Demengel}, {\em {Functional spaces for the theory of
  elliptic partial differential equations}}, Universitext, Springer, London;
  EDP Sciences, Les Ulis, 2012.

\bibitem{EHLW11}
{\sc J.~Escher, M.~Hillairet, P.~Lauren{\c{c}}ot, and C.~Walker}, {\em {Global
  weak solutions for a degenerate parabolic system modeling the spreading of
  insoluble surfactant}}, Indiana Univ. Math. J., 60 (2011), pp.~1975--2019.

\bibitem{EHLW1}
\leavevmode\vrule height 2pt depth -1.6pt width 23pt, {\em {Thin film equations
  with soluble surfactant and gravity: modeling and stability of steady
  states}}, Math. Nachr., 285 (2012), pp.~210--222.

\bibitem{EHLW4th}
\leavevmode\vrule height 2pt depth -1.6pt width 23pt, {\em {Weak solutions to a
  thin film model with capillary effects and insoluble surfactant}},
  Nonlinearity, 25 (2012), pp.~2423--2441.

\bibitem{GW06}
{\sc H.~Garcke and S.~Wieland}, {\em {Surfactant spreading on thin viscous
  films: nonnegative solutions of a coupled degenerate system}}, SIAM J. Math.
  Anal., 37 (2006), pp.~2025--2048.

\bibitem{JG}
{\sc O.~E. Jensen and J.~B. Grotberg}, {\em {Insoluble surfactant spreading on
  a thin viscous film: shock evolution and film rupture}}, J. Fluid Mech., 240
  (1992), pp.~259--288.

\bibitem{Lun}
{\sc A.~Lunardi}, {\em {Analytic semigroups and optimal regularity in parabolic
  problems}}, Modern Birkh{\"{a}}user Classics, Birkh{\"{a}}user/Springer Basel
  AG, Basel, 1995.

\bibitem{MW}
{\sc B.~Matioc and C.~Walker}, {\em {On the principle of linearized stability
  in interpolation spaces for quasilinear evolution equations}},
  arXiv:1804.10523,  (2018).

\bibitem{Pazy}
{\sc A.~Pazy}, {\em {Semigroups of linear operators and applications to partial
  differential equations}}, vol.~44 of Applied Mathematical Sciences,
  Springer-Verlag, New York, 1983.

\bibitem{TriI}
{\sc H.~Triebel}, {\em {Interpolation theory, function spaces, differential
  operators}}, Johann Ambrosius Barth, Heidelberg, second~ed., 1995.

\end{thebibliography}

\end{document}